\numberwithin{equation}{section}
\newtheorem{thm}[equation]{Theorem}
\newtheorem{cor}[equation]{Corollary}
\newtheorem{lem}[equation]{Lemma}
\newtheorem{prop}[equation]{Proposition}
\theoremstyle{definition}
\newtheorem{defn}[equation]{Definition}
\theoremstyle{remark}
\newtheorem{rem}[equation]{Remark}
\newcommand{\thmref}[1]{Theorem~\ref{#1}}
\newcommand{\propref}[1]{Proposition~\ref{#1}}
\newcommand{\lemref}[1]{Lemma~\ref{#1}}
\newcommand{\corref}[1]{Corollary~\ref{#1}}
\newcommand{\figref}[1]{Figure~\ref{#1}}
\newcommand{\secref}[1]{Section~{\bf\ref{#1}}}
\renewcommand\a{{\alpha}}
\newcommand\B{{\mathcal B}}
\renewcommand\b{{\beta}}
\newcommand\ch{\check}
\newcommand\cw{{\curlywedge}}
\newcommand\D{{\Delta}}
\renewcommand\d{{\delta}}
\renewcommand\deg{\operatorname{\mathsf{deg}}}
\newcommand\diag{\operatorname{\mathsf{diag}}}
\newcommand\du{\mathop{\uparrow\!\downarrow}}
\newcommand\F{{\Phi}}
\newcommand\f{{\varphi}}
\newcommand\ft[1]{{\parbox{110truemm}{#1}}}
\newcommand\g{\gamma}
\newcommand\GG{{\mathcal G}}
\renewcommand\H{{\mathbf H}}
\newcommand\h{{\mathfrak h}}
\renewcommand\l{\ell}
\newcommand\la{\lambda}
\newcommand\Mu{{\mathrm M}}
\newcommand\Nu{{\boldsymbol \#}}
\renewcommand\o{{\omega}}
\newcommand\ov{\overline}
\renewcommand\P{\mathbf P}
\newcommand\pt{\partial}
\newcommand\R{{\mathbb R}}
\newcommand\si{\sigma}
\newcommand\X{{\mathcal X}}
\newcommand\x{{\boldsymbol x}}
\newcommand\Z{\mathbb Z}
\begin{document}

\title[Random horospheric products]{Random walks on random
horospheric products}

\author{Vadim A. Kaimanovich}

\thanks{The research of the first author was undertaken, in part, thanks to funding
from the Canada Research Chairs program. Support from ERC grants 208091 GADA
and 257110 RaWG  is also gratefully acknowledged.}

\address{Department of Mathematics and Statistics, University of Ottawa,
585 King Edward, Ottawa ON, K1N 6N5, Canada}

\email{vkaimano@uottawa.ca, vadim.kaimanovich@gmail.com}

\author{Florian Sobieczky}

\address{Department of Mathematics, University of Colorado at Boulder, Campus Box 395 }

\email{florian.sobieczky@colorado.edu}

\begin{abstract}
By developing the entropy theory of random walks on equivalence relations and
analyzing the asymptotic geometry of horospheric products we describe the
Poisson boundary for random walks on random horospheric products of trees.
\end{abstract}

\keywords{Random walk, equivalence relation, Poisson boundary, asymptotic
entropy, horospheric product}

\subjclass[2010]{Primary 60J50; Secondary 05C81, 37A20}

\maketitle

\thispagestyle{empty}

\section*{Introduction}

\emph{Horospheric products of trees} were first introduced in the work of
Diestel and Leader \cite{Diestel-Leader01} in an attempt to answer a question
of Woess \cite{Woess91} on existence of vertex-transitive graphs not
quasi-isometric to Cayley graphs. Although the fact that the Diestel--Leader
graphs indeed provide such an example was only recently proved by Eskin,
Fisher and Whyte \cite{Eskin-Fisher-Whyte07}, in the meantime the construction
of Diestel and Leader attracted a lot of attention because of its numerous
interesting features (see \cite{Woess05,Bartholdi-Neuhauser-Woess08} and the
references therein). For instance, as it was observed by Woess, the
horospheric product of two homogeneous trees of the same degree $p+1$ is
isomorphic to the Cayley graph of the lamplighter group (the wreath product
$\Z\wr\Z_p$) with respect to an appropriate generating set. This observation
was the starting point for Bartholdi and Woess \cite{Bartholdi-Woess05} who
showed that, along with lamplighter groups, horospheric products of
homogeneous trees (not necessarily of the same degree!) provide one of very
few examples of infinite graphs, for which all spectral invariants can be
exhibited in an absolutely explicit form (and, in addition, the spectrum
happens to be pure point).

The construction of horospheric products is very natural from a geometrical
viewpoint. Namely, by choosing a point $\g$ on the boundary $\pt T$ of an
infinite tree $T$ one converts it into the genealogical tree generated by the
``mythological progenitor''~$\g$. The \emph{Busemann cocycle} $\b_\g$ on $T$
can be interpreted as the signed ``generations gap'' in the genealogical tree:
its level sets are the ``generations'' in $T$ as seen from $\g$. Given another
pointed at infinity tree $(T',\g')$, the horospheric product (or, rather,
products) of $(T,\g)$ and $(T',\g')$ are then the level sets of the aggregate
cocycle $\b_\g+\b_{\g'}$ on $T\times T'$, and the Busemann cocycles determine
a natural \emph{``height cocycle''} on individual horospheric products. It is
important for what follows that each horospheric product is endowed with two
boundaries (``lower'' and ``upper'') isomorphic to the punctured boundaries
$\pt T\setminus\{\g\}$ and $\pt T'\setminus\{\g'\}$ of the trees $T$ and~$T'$.

In the previous paper \cite{Kaimanovich-Sobieczky10} we considered the problem
of \emph{stochastic homogenization} for horospheric products. The approach
that we used there was an implementation of the ideas from
\cite{Kaimanovich03a}: to consider random graphs as leafwise graphs of an
appropriate \emph{graphed equivalence relation}; stochastic homogenization
means that there is a probability measure invariant with respect to this
relation. Here we are continuing to apply the ideas from \cite{Kaimanovich03a}
to random horospheric products by looking at random walks on them, or, in view
of the aforementioned reduction, at random walks along classes of graphed
equivalence relations, leafwise graphs of which are horospheric products.

The problem we address is that of the boundary behaviour of such leafwise
random walks, more precisely, the problem of identification of their
\emph{Poisson boundaries}. In the case of isotropic random walks on
horospheric products of homogeneous trees this problem (or, actually, even the
more general problem of describing the Martin boundary) was solved by
Brofferio and Woess \cite{Woess05,Brofferio-Woess05,Brofferio-Woess06}.
However, their approach (as is almost always the case with the Martin
boundary) heavily depends on explicit estimates of the Green kernel only
possible for highly symmetrical Markov chains. Following
\cite{Kaimanovich03a}, instead of this we use the \emph{entropy theory}.
Originally developed for dealing with the Poisson boundary of random walks on
groups (see \cite{Kaimanovich-Vershik83,Kaimanovich00a} and the references
therein), it is actually applicable in all situations when there is an
appropriate probability path space endowed with a measure preserving time
shift, in particular for random walks on equivalence relations in the presence
of a global stationary measure. In this setup the entropy theory was already
outlined by the first author in \cite{Kaimanovich98,Kaimanovich03a}; here we
give a more detailed exposition.

In the group case the entropy theory produces not only the entropy criterion
of \emph{boundary triviality}, but also very efficient geometrical conditions
for \emph{identification of the Poisson boundary} (``ray'' and ``strip''
approximations). Both these conditions readily carry over to random walks
along classes of graphed equivalence relations as well. In order to apply them
to horospheric products we establish the necessary geometrical ingredients.
Namely, we completely characterize \emph{geodesics} in horospheric products
and give necessary and sufficient conditions for a sequence of points to be
\emph{regular}, i.e., to follow a geodesic with a sublinear deviation.

As a consequence we establish our \textbf{main result} (\thmref{th:main}),
according to which the Poisson boundary of a random walk on an equivalence
relation graphed by horospheric products in the presence of a global
stationary probability measure is completely determined by the \emph{height
drift} $h$ (the expectation of the height cocycle): if $h=0$, then a.e.\
leafwise Poisson boundary is trivial, whereas if $h\neq 0$ then a.e.\ leafwise
Poisson boundary coincides with the corresponding (lower or upper, depending
on the sign of $h$) boundary of the underlying horospheric product endowed
with the corresponding limit (hitting) distribution. This description is in
perfect keeping with the situation for horospheric products of homogeneous
trees \cite{Woess05} or for lamplighter groups \cite{Kaimanovich91} (whose
Cayley graphs for an appropriate choice of generators are horospheric products
of homogeneous trees of the same degree \cite{Woess05}).

The main result implies that for \emph{reversible} random walks on random
horospheric products the leafwise Poisson boundaries are almost surely
trivial, because in this situation the height drift (being the expectation of
an additive cocycle) vanishes. This is the case for simple random walks on
stochastically homogeneous horospheric products considered in
\cite{Kaimanovich-Sobieczky10}, in particular, for horospheric products of
augmented Galton--Watson trees with the same offspring expectation.

On the other hand, although already lamplighter groups and horospheric
products of homogeneous trees readily provide examples of random walks on
horospheric products with \emph{non-zero height drift}, it would be
interesting to have more ``probabilistically natural'' examples of this kind.

It is worth mentioning in this respect that the \emph{homesick simple random
walk} with an integer parameter $d$ on a pointed at infinity tree $T$ can be
interpreted as the projection of the usual simple random walk on the
horospheric product of $T$ and the homogeneous tree of degree $d+1$ (usually
``homesickness'' is defined with respect to a reference point inside the
graph, e.g., see \cite{Lyons-Pemantle-Peres96}, but this definition in an
obvious way adapts to pointed at infinity trees as well). For usual simple
random walks on random Galton--Watson trees existence of a linear rate of
escape was established in \cite{Lyons-Pemantle-Peres95} by using an explicit
stationary measure on the space of trees. In the homesick case, although a
linear rate of escape still exists \cite{Lyons-Pemantle-Peres96a}, no such
construction is known.

Yet another link between horospheric products and homesick random walks worth
further investigation is provided by a rather unexpected behavior of the rate
of escape of homesick random walks on the lamplighter group exhibited in
\cite{Lyons-Pemantle-Peres96a} (although homesickness in
\cite{Lyons-Pemantle-Peres96a} is defined with respect to the standard
generating set rather than the one whose Cayley graph is a horospheric
product).

Let us finally mention that our results (with rather straightforward
modifications) carry over to horospheric products with more than two
multipliers which were introduced in \cite[p.~356]{Kaimanovich-Woess02} and
further studied in \cite{Bartholdi-Neuhauser-Woess08}.

\medskip

The paper has the following structure. In \secref{sec:asgeo} we study the
\emph{asymptotic geometry of individual horospheric products}. After reminding
the necessary definitions concerning trees (\secref{sec:trees}) and their
horospheric products (\secref{sec:products}), in \secref{sec:geod} we reprove
Bertacchi's formula \cite{Bertacchi01} for the distance in a horospheric
product (\propref{pr:distance}). Our argument is somewhat different and
provides an explicit description of geodesic segments in horospheric products,
on the base of which we further describe geodesic rays and bilateral geodesics
(\propref{pr:ray} and \propref{pr:bil}, respectively). In \secref{sec:reg} we
give criteria for a sequence of points in a horospheric product to be regular
(\thmref{th:ray}). Finally, in \secref{sec:bdry} we discuss boundaries of
horospheric products.

\secref{sec:random} contains the probabilistic part of our arguments. We begin
by reminding the basic definitions concerning graphed equivalence relations
and random graphs (\secref{sec:eq}). In \secref{sec:rweq} we discuss Markov
chains along classes of an equivalence relation endowed with a quasi-invariant
measure; the exposition here is based on \cite{Kaimanovich98}. We express the
action of the corresponding Markov operator on measures in terms of the
leafwise transition probabilities and the Radon--Nikodym cocycle of the
equivalence relation (\propref{pr:deriv}) and give a necessary and sufficient
condition for stationarity of a measure on the state space
(\corref{cor:stat}). In particular, an invariant measure of a graphed
equivalence relation becomes stationary for the leafwise simple random walk
after multiplication by the density equal to the vertex degree function
(\corref{cor:simple}).

In \secref{sec:entr} we develop the \emph{entropy theory for random walks on
equivalence relation}. The exposition here follows the outlines given in
\cite{Kaimanovich98,Kaimanovich03a} and is completely parallel to the entropy
theory for random walks in random environment on groups \cite{Kaimanovich90a}
(which, in turn, was inspired by the case of the usual random walks on groups
\cite{Kaimanovich-Vershik83}). First we prove that the leafwise tail and
Poisson boundaries coincide $\P_x$ -- mod 0 for a.e.\ initial point $x$
(\thmref{th:02}), after which we define the \emph{asymptotic entropy} $\h$ and
prove that the leafiwse tail ($\equiv$ Poisson) boundaries are a.e.\ trivial
if and only if $\h=0$ (\thmref{th:entr}). By passing to an appropriate
\emph{boundary extension} of the original equivalence relation
\cite{Kaimanovich05a}, \thmref{th:entr} is also applicable to the problem of
description of non-trivial Poisson boundaries of leafwise Markov chains.
Indeed, a quotient of the Poisson boundary is \emph{maximal} (i.e., coincides
with the whole Poisson boundary) if and only if for almost all conditional
chains determined by the points of this quotient the Poisson boundary is
trivial. Thus, the criterion from \thmref{th:entr} allows one to carry over
the \emph{ray} and the \emph{strip} criteria used for identification of the
Poisson boundary in the group case \cite{Kaimanovich00a} to the setup of
random walks along classes of graphed equivalence relations.

Finally, in \secref{sec:final} we formulate and prove the main result of the
present paper: the aforementioned description of Poisson boundaries of random
walks along random horospheric products (\thmref{th:main}).

\section{Asymptotic geometry of horospheric products} \label{sec:asgeo}

\subsection{Trees} \label{sec:trees}

We begin by recalling that a \emph{tree} is a connected graph without cycles.
Any two vertices $x,y$ in a tree $T$ can be joined with a unique segment
$[x,y]$ which is \emph{geodesic} with respect to the standard graph distance
$d$. Throughout the paper we will only be considering trees ``without
leaves'', i.e., such that the degree of any vertex is at least 2.

Any locally finite tree~$T$ has a natural \emph{compactification} $\ov T = T
\sqcup \pt T$ obtained in the following way: a sequence of vertices $x_n$
which goes to infinity in $T$ converges in this compactification if and only
if for a certain ($\equiv$ any) reference point $o\in T$ the geodesic segments
$[o,x_n]$ converge pointwise. Thus, for any reference point $o\in T$ the
\emph{boundary} $\pt T$ can be identified with the space of geodesic rays
issued from $o$ (and endowed with the topology of pointwise convergence).
There are many other equivalent descriptions of the boundary $\pt T$ (and of
the compactification~$\ov T$), in particular, as the \emph{space of ends} of
$T$ and as the \emph{hyperbolic boundary} of $T$.

A tree $T$ with a distinguished boundary point $\g\in\pt T$ is called
\emph{pointed at infinity} ($\equiv$ remotely rooted; in the terminology of
Cartier \cite{Cartier72} the point $\g$ is called a ``mythological
progenitor''). We shall use the notation $\pt_\odot T=\pt T\setminus\{\g\}$
for the \emph{punctured boundary} of a pointed at infinity tree $(T,\g)$. A
triple $T_o^\g=(T,o,\g)$ with $o\in T$ and $\g\in\pt T$ is a \emph{rooted tree
pointed at infinity}.

Any two geodesic rays converging to the same boundary point eventually meet,
so that any boundary point $\g\in\pt T$ determines the associated additive
$\Z$-valued \emph{Busemann cocycle} on $T$. It is defined as
\begin{equation} \label{eq:Bu}
\b_\g(x,y) = d(y,z) - d(x,z) \;,
\end{equation}
where $ z = x \cw_\g y $ is the \emph{confluence} of the geodesic rays
$[x,\g)$ and $[y,\g)$, see \figref{fig:bus}.

\begin{figure}[h]
\begin{center}
     \psfrag{x}[cr][cr]{$x$}
     \psfrag{o}[cr][cr]{$z$}
     \psfrag{y}[cr][cr]{$y$}
     \psfrag{g}[cl][cl]{$\g$}
     \psfrag{d}[cl][cl]{$\pt T$}
          \includegraphics{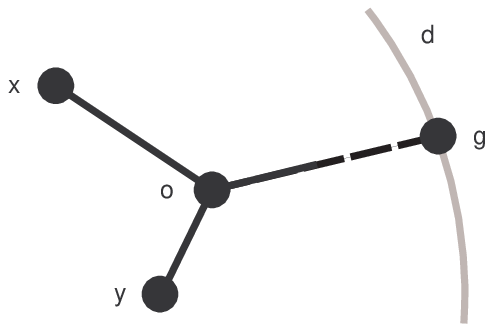}
          \end{center}
          \caption{}
          \label{fig:bus}
\end{figure}

Obviously,
$$
|\b_\g(x,y)| \le d(x,y) \qquad\forall\,x,y\in T,\;\g\in\pt T \;.
$$
The Busemann cocycle can also be defined as
$$
\b_\g(x,y) = \lim_{z\to\g} \bigl[ d(y,z) - d(x,z) \bigr] \;,
$$
so that it is a ``regularization'' of the formal expression $d(y,\g)-d(x,\g)$.
In the presence of a reference point $o\in T$ one can also talk about the
\emph{Busemann function}
$$
b_\g(x) = \b_\g(o,x) \;.
$$
The level sets
$$
H_k = \{x\in T : b_\g(x) = k \}
$$
of the Busemann function ($\equiv$ of the Busemann cocycle) are called
\emph{horospheres} centered at the boundary point $\g$, see \figref{fig:tree}.

\begin{figure}[h]
\begin{center}
     \psfrag{hm}[cl][cl]{$H_{-1}$}
     \psfrag{o}[cr][cr]{$o$}
     \psfrag{h}[cl][cl]{$H_0$}
     \psfrag{g}[cl][cl]{$\g$}
     \psfrag{hp}[cl][cl]{$H_1$}
          \includegraphics{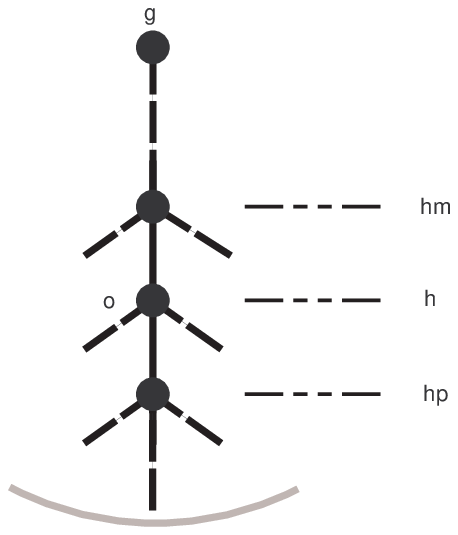}
          \end{center}
          \caption{}
          \label{fig:tree}
\end{figure}

\subsection{Horospheric products} \label{sec:products}

\begin{defn} \label{def:DL}
Let $T=(T,o,\g)$ and $T'=(T',o',\g')$ be two rooted trees pointed at infinity,
and let $b=\b_{\g}(o,\cdot),\; b'=\b_{\g'}(o',\cdot)$ be the corresponding
Busemann functions. The \emph{horospheric product} $T\du T'$ is the graph with
the vertex set
$$
\{(x,x')\in T\times T': b(x)+b'(x')=0 \}
$$
and the edge set
$$
\bigl\{\bigl( (x,x'), (y,y') \bigr) : (x,y) \;\text{and}\;(x',y') \;\text{are
edges in}\; T,T',\;\text{respectively} \bigr\} \;.
$$
\end{defn}

Geometrically one can think about the horospheric products in the following
way \cite{Kaimanovich-Woess02}. Draw the tree $T'$ upside down next to $T$ so
that the respective horospheres $H_k(T)$ and $H_{-k}(T')$ are at the same
level. Connect the two origins $o,o'$ with an elastic spring. It can move
along each of the two trees, may expand or contract, but must always remain
horizontal. The vertex set of $T\du T'$ consists then of all admissible
positions of the spring. From a position $(x,x')$ with $b(x)+b'(x')=0$ the
spring may move downwards to one of the ``sons'' of $x$ and at the same time
to the ``father'' of $x'$, or upwards in an analogous way. Such a move
corresponds to going to a neighbour $(y,y')$ of $(x,x')$, see \figref{fig:dl}.

\begin{figure}[h]
\begin{center}
     \psfrag{o2}[cl][cl]{$o'$}
     \psfrag{o}[cr][cr]{$o$}
     \psfrag{g2}[cl][cl]{$\g'$}
     \psfrag{g}[cl][cl]{$\g$}
          \includegraphics{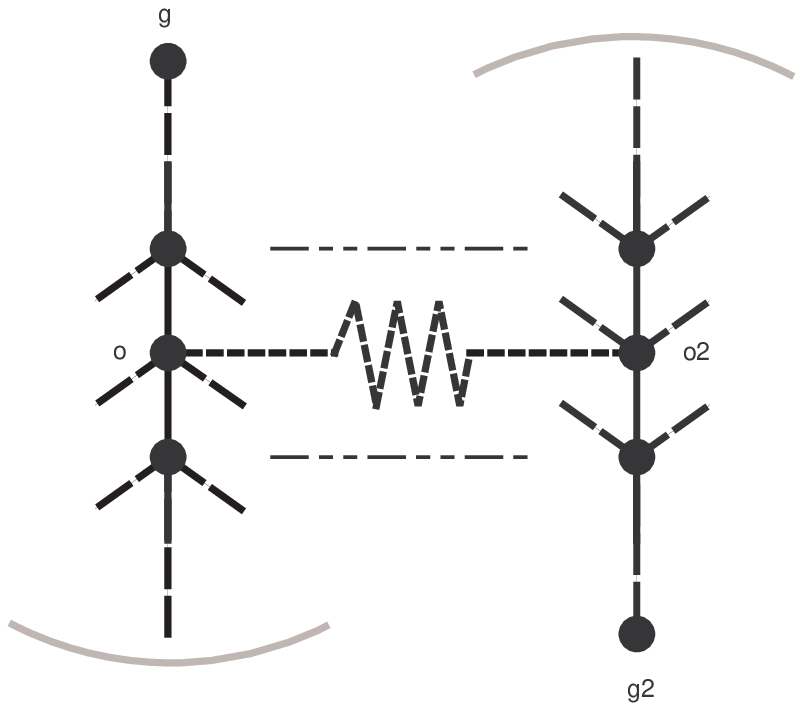}
          \end{center}
          \caption{}
          \label{fig:dl}
\end{figure}

See \cite{Woess05,Bartholdi-Neuhauser-Woess08,Kaimanovich-Sobieczky10} and the
references therein for the historical background and recent works on
horospheric products of trees (aka \emph{Diestel--Leader graphs} or
\emph{horocyclic products}).

We shall use capital letters for denoting points of the horospheric product
$T\du T'$ (so that $X=(x,x')$ with $x\in T,x'\in T'$, etc.). In particular, we
denote by $O=(o,o')$ the reference point in $T\du T'$. The graph $T\du T'$ is
endowed with the \emph{height cocycle}
\begin{equation} \label{eq:height}
\B(X,Y) = \b_\g(x,y) = -\b_{\g'}(x',y') \;.
\end{equation}
For simplicity below we shall use the ``height function'' on $T\du T'$
$$
\ov X = -\B(O,X)
$$
defined in accordance with \figref{fig:dl} (so that the ``higher'' is the
level, the bigger is the value $\ov X$). In the same way, we put $\ov x=-b(x)$
and $\ov{x'}=b'(x')$ for any $x\in X, x'\in X'$, so that
$$
\ov X = \ov x = \ov{x'} \qquad \forall\; X=(x,x')\in T\du T' \;.
$$

\subsection{Geodesic segments and rays} \label{sec:geod}

Before establishing an explicit formula for the graph metric on the
horospheric product $T\du T'$ let us first notice that the sheer existence of
the natural projections of $T\du T'$ onto $T,T'$ and $\Z$ (the latter by the
height function) implies the obvious inequalities
\begin{equation} \label{eq:dineq}
d(x,y),\; d(x',y'),\; |\B(X,Y)| \le d(X,Y)
\end{equation}
for all pairs of points $X=(x,x'),Y=(y,y')\in T\du T'$.

Formula \eqref{eq:dDL} below for the graph metric in $T\du T'$ was first
established by Bertacchi \cite[Proposition~3.1]{Bertacchi01} (although
Bertacchi considered horospheric products of homogeneous trees only, her
arguments are actually valid in the general case as well). We shall give here
a somewhat different argument, which, in particular, allows us to obtain an
explicit description of all geodesics in $T\du T'$.

\begin{prop} \label{pr:distance}
The graph distance in the horospheric product $T\du T'$ is
\begin{equation} \label{eq:dDL}
d(X,Y) = d(x,y) + d(x',y') - |\B(X,Y)|  \;.
\end{equation}
for all $X=(x,x'),\, Y=(y,y') \in T\du T'$.
\end{prop}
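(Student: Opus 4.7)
The plan is to prove \eqref{eq:dDL} by matching upper and lower bounds, with the construction for the upper bound simultaneously describing all geodesic segments. Let $z = x\cw_\g y$ and $z' = x'\cw_{\g'}y'$ be the confluences in $T$ and $T'$, and set $A = d(x,z)$, $B = d(z,y)$, $A' = d(x',z')$, $B' = d(z',y')$, so that $d(x,y) = A + B$ and $d(x',y') = A' + B'$. Computing $\ov z$ from its two defining vertices, and similarly for $\ov{z'}$, and using $\ov x = \ov{x'}$ and $\ov y = \ov{y'}$, one extracts the key arithmetic identity $A + A' = B + B'$, which will bridge the two sides of \eqref{eq:dDL}.

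For the lower bound I would exploit that $T$ and $T'$ are trees. Any walk $X_0, X_1, \dots, X_n$ from $X$ to $Y$ in $T\du T'$ projects to walks of length $n$ in $T$ from $x$ to $y$ and in $T'$ from $x'$ to $y'$, each of which must pass through its respective confluence. At the moment the $T$-projection visits $z$, the height function takes the value $\ov X + A$; at the moment the $T'$-projection visits $z'$, it takes the value $\ov X - A'$. Hence $i \mapsto \ov{X_i}$ is a $\pm 1$-step walk on $\Z$ that starts at $\ov X$, ends at $\ov Y = \ov X - \B(X,Y)$, and visits both extreme values. A standard first-visit argument then gives $n \ge 2(A + A') - |\ov Y - \ov X|$, which by $A + A' = B + B'$ rewrites as $d(x,y) + d(x',y') - |\B(X,Y)|$.

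For the upper bound I would write down an explicit path of this length. Assume $\B(X,Y) \ge 0$ (the opposite sign is symmetric). Concatenate three stages: (i) $A$ up-steps, in $T$ taking $x \to z$ along fathers and in $T'$ taking any $A$ son-steps from $x'$; (ii) $A + A'$ down-steps, in $T$ descending from $z$ through $y$ to a descendant $u$ of $y$ at depth $B'$, and in $T'$ moving by $A + A'$ father-steps, which land exactly at $z'$ irrespective of the branch chosen in stage~(i); (iii) $B'$ up-steps, in $T$ ascending $u \to y$ along fathers and in $T'$ descending from $z'$ along the unique branch through $y'$. Every step is a legal move in $T \du T'$, the required descendants exist because every vertex has degree at least $2$, and the total length is $A + (A + A') + B' = 2A + A' + B' = d(x,y) + d(x',y') - \B(X,Y)$.

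The main subtlety is recognizing that the walk is forced to visit \emph{two} distinct heights, one $A$ above $\ov X$ (pinned by the $T$-confluence) and one $A'$ below it (pinned by the $T'$-confluence), so that the height excursion must span the full range $A + A'$ regardless of the sign of $\B$; the elementary inequalities $d(X,Y) \ge d(x,y), d(x',y'), |\B(X,Y)|$ of \eqref{eq:dineq} are not sharp enough on their own. Once this observation is in place, both bounds reduce to one-dimensional bookkeeping, and the three-stage construction simultaneously yields the explicit description of geodesic segments that will feed into \propref{pr:ray} and \propref{pr:bil}.
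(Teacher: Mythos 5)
Your proof is correct and follows essentially the same route as the paper: the lower bound comes from projecting to the height function on $\Z$ and observing that the height walk is pinned at the two levels $\ov{x\cw y}=\ov X+A$ and $\ov{x'\cw y'}=\ov X-A'$ (the paper phrases this as the $\Z$-projection containing all oriented edges of four segments, your first-visit counting is the same constraint in different bookkeeping), while the upper bound is the same three-stage lifted path, with your branch choices in stages (i) and (ii) corresponding exactly to the paper's parameters $z'$ and $z$. The only (immaterial for this statement) difference is that the paper's edge-counting version of the lower bound also delivers the full classification of \emph{all} geodesics used later in \propref{pr:ray} and \propref{pr:bil}, which your visit-time argument would need a small additional step to recover.
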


\begin{proof}
Let $\F$ be a path joining the points $X$ and $Y$. Then its projection $\f$ to
$T$ (resp., its projection $\f'$ to $T'$) joins $x$ and $y$ (resp., $x'$ and
$y'$). Since $T$ and $T'$ are trees, $\f$ and $\f'$ should pass through all
edges of the geodesics $[x,y]$ and $[x',y']$, respectively. Let $x\cw
y=x\cw_\g y$ and $x'\cw y'=x'\cw_{\g'} y'$ be the confluences of the geodesic
rays $[x,\g),[y,\g)$ and $[x',\g'),[y',\g')$, respectively. The geodesic
$[x,y]$ in $T$ consists of the ascending part $[x,x\cw y]$ (along which the
height increases) and the descending part $[x\cw y,y]$ (along which the height
decreases). In the same way the geodesic $[x',y']$ in $T'$ consists of the
descending part $[x',x'\cw y']$ and the ascending part $[x'\cw y',y']$, cf.
\figref{fig:bus}.

Thus,
\begin{equation} \label{eq:proj}
\ft{\textsf{the projection $\phi$ of $\F$ to $\Z$ (by the height function)
joins the points $\ov X,\ov Y\in\Z$ and contains all the edges (with the
appropriate orientation!) from the oriented segments $\left[\,\ov X,\ov{x\cw
y}\,\right],\; \left[\,\ov{x\cw y},\ov Y\,\right]$ and $\left[\,\ov
X,\ov{x'\cw y'}\,\right],\; \left[\,\ov{x'\cw y'},\ov Y\,\right]$ \;. }}
\end{equation}

These segments do not overlap (if their orientation is taken into account),
except for the oriented segment $\left[\,\ov X,\ov Y\,\right]$ which appears
twice (see \figref{fig:geod}, where $\ov X<\ov Y$).

\begin{figure}[h]
\begin{center}
     \psfrag{x}[cl][cl]{$x$}
     \psfrag{y}[cl][cl]{$y$}
     \psfrag{x1}[cl][cl]{$x'$}
     \psfrag{y1}[cl][cl]{$y'$}
     \psfrag{xy}[cl][cl]{$x\cw y$}
     \psfrag{xy1}[cl][cl]{$x'\cw y'$}
     \psfrag{ox}[cl][cl]{$\ov X$}
     \psfrag{oy}[cl][cl]{$\ov Y$}
     \psfrag{oxy}[cl][cl]{$\ov{x\cw y}$}
     \psfrag{oxy1}[cl][cl]{$\ov{x'\cw y'}$}
          \includegraphics{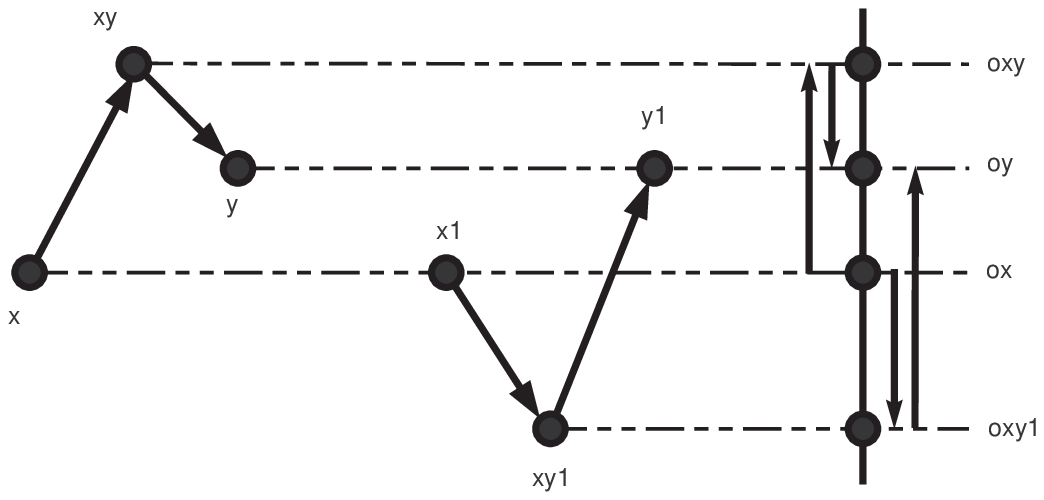}
          \end{center}
          \caption{}
          \label{fig:geod}
\end{figure}

Therefore, the length of $\F$ satisfies the inequality
$$
|\F| \ge \Bigl|\left[\,\ov X,\ov{x\cw y}\,\right]\Bigr| +
\Bigl|\left[\,\ov{x\cw y},\ov Y\,\right]\Bigr| + \Bigl|\left[\,\ov X,\ov{x'\cw
y'}\,\right]\Bigr| + \Bigl|\left[\,\ov{x'\cw y'},\ov Y\,\right]\Bigr| -
\Bigl|\left[\,\ov X,\ov Y\,\right]\Bigr| \;,
$$
the right-hand side of which being precisely the right-hand side of equation
\eqref{eq:dDL}, so that we have proved the inequality
$$
d(X,Y) \ge d(x,y) + d(x',y') - |\B(X,Y)| \;.
$$

\medskip

Now we shall show that paths of length $d(x,y) + d(x',y') - |\B(X,Y)|$ joining
$X$ and $Y$ do exist, and, moreover, we shall explicitly describe all of them.
Let us consider three cases.

\medskip

(i) $\ov X<\ov Y$. Then there exists a unique path $\phi$ in $\Z$ of length
$d(x,y) + d(x',y') - |\B(X,Y)|$ satisfying condition \eqref{eq:proj}. Indeed,
there is only one way to make a path joining $\ov X$ and $\ov Y$ by using (one
time each) all the oriented edges contained in the segments from
\eqref{eq:proj}. This is the path
$$
\phi = \left[\,\ov X, \ov{x'\cw y'}\,\right] \, \left[\,\ov{x'\cw y'},
\ov{x\cw y}\,\right] \, \left[\,\ov{x\cw y}, \ov Y\,\right] \;.
$$
In order to lift it to $T\du T'$ one has to choose a point $z\in T$ with $\ov
z=\ov{x'\cw y'}$ and such that $z$ is a descendant of $x$ (i.e., $x$ lies on
the geodesic ray $[z,\g)$), and a point $z'\in T'$ with $\ov{z'}=\ov{x\cw y}$
and such that $z'$ is a descendant of $y'$. Then the resulting path
$\F=(\f,\f')$ with the projections
$$
\f = [x,z]\, [z,x\cw y]\, [x\cw y,y] \;, \qquad \f' = [x',x'\cw y']\, [x'\cw
y',z']\, [z',y']
$$
is a geodesic joining $X$ and $Y$, and all geodesics between $X$ and $Y$ have
this form, see \figref{fig:geod2}.

\begin{figure}[h]
\begin{center}
     \psfrag{x}[cr][cr]{$x$}
     \psfrag{y}[cr][cr]{$y$}
     \psfrag{x1}[cl][cl]{$x'$}
     \psfrag{y1}[cl][cl]{$y'$}
     \psfrag{xy}[cr][cr]{$x\cw y$}
     \psfrag{xy1}[cl][cl]{$x'\cw y'$}
     \psfrag{z}[cr][cr]{$z$}
     \psfrag{z1}[cl][cl]{$z'$}
          \includegraphics{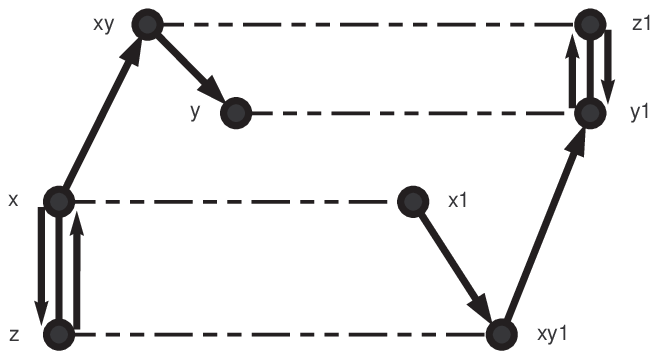}
          \end{center}
          \caption{}
          \label{fig:geod2}
\end{figure}

\medskip

(ii) $\ov X>\ov Y$. \emph{Mutatis mutandis}, the situation is precisely the
same as in case (i), see \figref{fig:geod3}.

\begin{figure}[h]
\begin{center}
     \psfrag{x}[cr][cr]{$x$}
     \psfrag{y}[cr][cr]{$y$}
     \psfrag{x1}[cl][cl]{$x'$}
     \psfrag{y1}[cl][cl]{$y'$}
     \psfrag{xy}[cr][cr]{$x\cw y$}
     \psfrag{xy1}[cl][cl]{$x'\cw y'$}
     \psfrag{z}[cr][cr]{$z$}
     \psfrag{z1}[cl][cl]{$z'$}
          \includegraphics{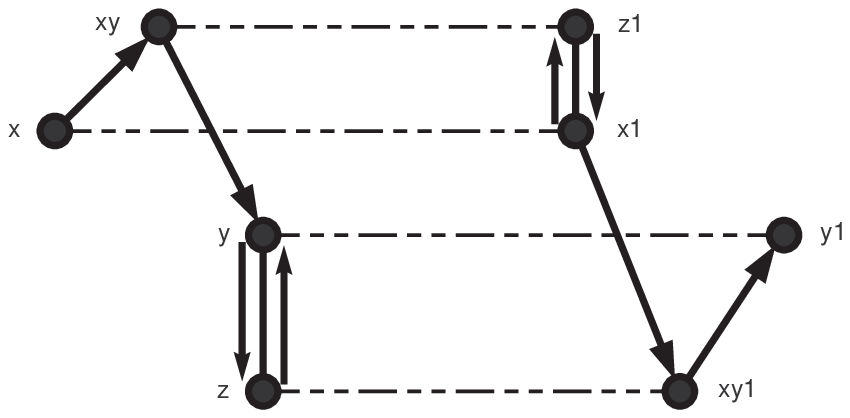}
          \end{center}
          \caption{}
          \label{fig:geod3}
\end{figure}

\medskip

(iii) $\ov X=\ov Y$. In this case, due to absence of the $\left[\,\ov X,\ov
Y\,\right]$ segment, there are two paths in $\Z$ satisfying condition
\eqref{eq:proj}:
$$
\phi_1 = \left[\,\ov X, \ov{x'\cw y'}\,\right] \, \left[\,\ov{x'\cw y'},
\ov{x\cw y}\,\right] \, \left[\,\ov{x\cw y}, \ov Y\,\right] \;.
$$
and
$$
\phi_2 = \left[\,\ov X, \ov{x\cw y}\,\right] \, \left[\,\ov{x\cw y}, \ov{x'\cw
y'}\,\right] \, \left[\,\ov{x'\cw y'}, \ov Y\,\right] \;.
$$
Correspondingly, there are two types of geodesics joining $X$ and $Y$, see
\figref{fig:geod4}.

\begin{figure}[h]
\begin{center}
     \psfrag{x}[cr][cr]{$x$}
     \psfrag{y}[bl][bl]{$y$}
     \psfrag{x1}[tr][tr]{$x'$}
     \psfrag{y1}[cl][cl]{$y'$}
     \psfrag{xy}[cr][cr]{$x\cw y$}
     \psfrag{xy1}[cl][cl]{$x'\cw y'$}
     \psfrag{z}[cr][cr]{$z_1$}
     \psfrag{z1}[cl][cl]{$z'_1$}
     \psfrag{w}[cr][cr]{$z_2$}
     \psfrag{w1}[cl][cl]{$z'_2$}
          \includegraphics{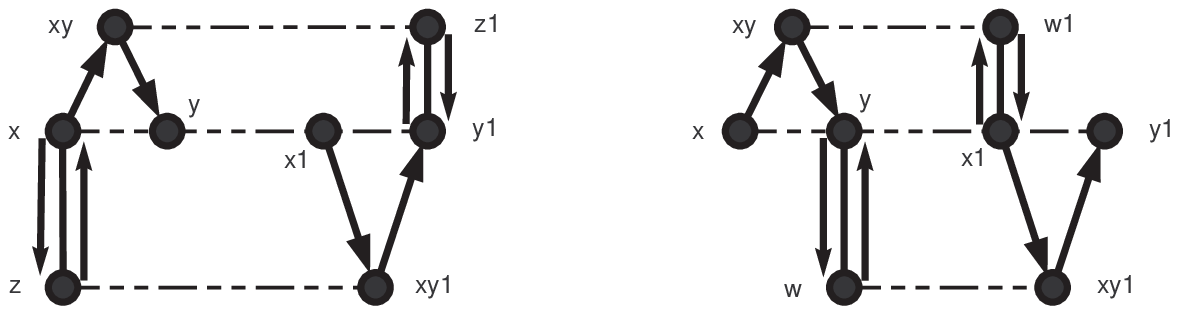}
          \end{center}
          \caption{}
          \label{fig:geod4}
\end{figure}

\end{proof}

By letting the length of geodesics go to infinity in the classification
obtained in the proof of \propref{pr:distance}, we obtain the following
description of geodesic rays and bilateral geodesics in $T\du T'$:

\begin{prop} \label{pr:ray}
Given a point $X=(x,x')\in T\du T'$, any pair $(z,\o')\in T\times \pt_\odot
T'$ with $\ov z = \ov{x'\cw\o'}$ determines a geodesic ray $\F=(\f,\f')$ in
$T\du T'$ issued from $(x,x')$ with the projections
$$
\f = [x,z]\, [z,\g) \;, \qquad \f'=[x',\o') \;;
$$
any pair $(\o,z')\in \pt_\odot T\times T'$ with $\ov{z'} = \ov{x\cw\o}$
determines a geodesic ray $\F=(\f,\f')$ in $T\du T'$ issued from $(x,x')$ with
the projections
$$
\f = [x,\g) \;, \qquad \f'=[x',z']\, [z',\o') \;,
$$
and all geodesic rays in $T\du T'$ are of this form, see \figref{fig:georay}.
\end{prop}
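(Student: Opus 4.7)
My plan is to argue by passing to the limit in the classification of finite geodesic segments obtained in the proof of \propref{pr:distance}.

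\emph{Forward direction.} Given $(z,\omega') \in T \times \pt_\odot T'$ with $\ov z = \ov{x' \cw \omega'}$, I would explicitly construct the candidate ray $\Phi_n = (\varphi_n,\varphi'_n)$: set $W := x' \cw \omega'$ and $k := d(x,z)$, let $\varphi$ descend along $[x,z]$ for $n \le k$ and then continue along $[z,\g)$, and let $\varphi'$ ascend along $[x',W]$ for $n \le k$ and then continue along $[W,\omega')$. Each step changes both Busemann values by $\pm 1$ in opposite directions, so $\Phi_n \in T \du T'$ and consecutive $\Phi_n$ are joined by edges. The identity $d(X,\Phi_n) = n$ follows by direct substitution into \eqref{eq:dDL}, after splitting into the regimes $n \le k$, $k \le n \le 2k$, $n \ge 2k$. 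The symmetric configuration handles the other family.

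\emph{Converse direction.} For an arbitrary geodesic ray $\Phi$ from $X$, I would first work at the level of the height walk $h_n := \ov{\Phi_n}$ on $\Z$, a $\pm 1$ walk. Applying \propref{pr:distance} to each initial segment $[X,\Phi_n]$ restricts its $\Z$-projection to the three-leg pattern of \eqref{eq:proj}: down--up--down whenever $h_n > h_0$, up--down--up whenever $h_n < h_0$, and either pattern when $h_n = h_0$. The crucial sub-claim is that \emph{the infinite height walk has at most one direction change}: if it reversed at times $t_1 < t_2$, then for all sufficiently large $n$ the three legs of lengths $t_1$, $t_2 - t_1$, $n - t_2$ would be simultaneously positive while the sign of $h_n - h_0$ would force the opposite three-leg pattern to the one the walk exhibits, and this cannot be reconciled by collapsing legs to zero length. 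Consequently $h_n$ is eventually monotone with at most a single turnaround at some time $s_1 \ge 0$. In the case $h_n \to +\infty$, set $z := \varphi_{s_1}$ and $W := \varphi'_{s_1}$: for $n \le s_1$ each step goes one step down in $T$ and one up in $T'$, so $\varphi$ descends along $[x,z]$ and $\varphi'$ ascends along $[x',W]$, while for $n \ge s_1$ each step moves one step toward $\g$ in $T$ and one step away from $\g'$ in $T'$, so the uniqueness of the parent in $T$ forces $\varphi$ to continue along $[z,\g)$ and $\varphi'$ to pick out a geodesic ray $[W,\omega')$ for some $\omega' \in \pt_\odot T'$. The only potential backtracking occurs at the transition $s_1$; because parent-uniqueness already makes $\varphi$ backtrack there to $\varphi_{s_1 - 1}$, a simultaneous backtracking in $\varphi'$ would give $\Phi_{s_1+1} = \Phi_{s_1-1}$, contradicting the geodesic property. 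Hence $\varphi' = [x',W] \cdot [W,\omega')$ is a genuine geodesic in $T'$, so $W = x' \cw \omega'$ and $\ov z = \ov W = \ov{x' \cw \omega'}$. The case $h_n \to -\infty$ is symmetric and yields the second family.

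\emph{Main obstacle.} The delicate step is the combinatorial rigidity of the height walk: at-most-one direction change must be extracted from the three-leg constraint imposed on every initial segment by \propref{pr:distance}. Once that is established, the remaining arguments consist of bookkeeping on up/down steps and routine manipulations of tree geodesics.
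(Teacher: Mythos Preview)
Your proposal is correct and follows exactly the approach the paper indicates: the paper gives no explicit proof, merely the sentence ``By letting the length of geodesics go to infinity in the classification obtained in the proof of \propref{pr:distance}, we obtain the following description of geodesic rays\dots'', and your argument is precisely the natural elaboration of that remark---limiting in the three-leg height pattern to force at most one turnaround, then reading off the two families. Your forward direction via direct substitution into \eqref{eq:dDL} and your converse via the height-walk rigidity are both sound; the only cosmetic point is that your ``up/down in $T'$'' terminology is tree-intrinsic (toward/away from $\g'$) rather than height-wise, which is fine once noted.
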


\begin{figure}[h]
\begin{center}
     \psfrag{x}[cr][cr]{$x$}
     \psfrag{x1}[tr][tr]{$x'$}
     \psfrag{xy}[cr][cr]{$x\cw \o$}
     \psfrag{xy1}[cl][cl]{$x'\cw \o'$}
     \psfrag{z}[cr][cr]{$z$}
     \psfrag{z1}[cl][cl]{$z'$}
     \psfrag{g}[b][b]{$\g$}
     \psfrag{o1}[b][b]{$\o'$}
     \psfrag{g1}[t][t]{$\g'$}
     \psfrag{o}[t][t]{$\o$}
          \includegraphics{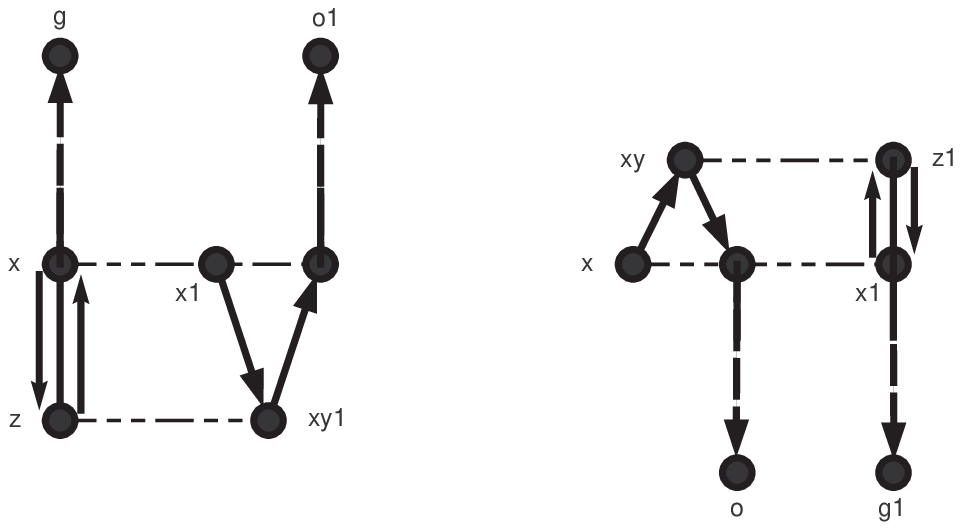}
          \end{center}
          \caption{}
          \label{fig:georay}
\end{figure}

\begin{prop} \label{pr:bil}
All bilateral geodesics $\F$ in $T\du T'$ belong to one of the following 3
classes described in terms of their projections $\phi$ to $\Z$ (by the height
functions) and $\f,\f'$ to $T,T'$, respectively:
\begin{itemize}
\item[(i)]
$\phi$ coincides with $\Z$ (run in either positive or negative direction), and
$\f,\f'$ are, respectively, the bilateral geodesics $(\o,\g)$ and $(\g',\o')$
(both run in either the positive or the negative direction) with
$\g\in\pt_\odot T$ and $\g'\in\pt_\odot T'$;
\item[(ii)]
There is $h\in\Z$ such that $\phi$ is the concatenation $(-\infty,h]
[h,-\infty)$ of two copies of the geodesic ray $[h,-\infty)$ run in the
opposite directions, $\f$ is the geodesic $(\o_1,\o_2)$ for certain
$\o_1\neq\o_2\in\pt_\odot T$ with $\ov{\o_1\cw\o_2}=h$, and
$\f'=(\g',x'][x',\g')$ for a certain $x'\in T'$ with $\ov{x'}=h$;
\item[(iii)]
The same as (ii) with $T$ and $T'$ exchanged: $\phi=(\infty,h][h,\infty)$,
$\f=(\g,x][x,\g)$ for $x\in T$ with $\ov x=h$, and $\f'=(\o'_1,\o'_2)$ for
$\o'_1\neq\o'_2\in\pt_\odot T'$ with $\ov{\o'_1\cw\o'_2}=h$.
\end{itemize}
\end{prop}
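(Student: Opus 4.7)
The plan is to reduce the classification of bilateral geodesics in $T\du T'$ to that of geodesic rays already obtained in Proposition \ref{pr:ray}. Fix an arbitrary point $X_0$ on a bilateral geodesic $\F=(X_n)_{n\in\Z}$; the forward ray $(X_0,X_1,X_2,\ldots)$ and the backward ray $(X_0,X_{-1},X_{-2},\ldots)$ are both geodesic rays, and Proposition \ref{pr:ray} classifies each of them according to whether the height $\ov{X_n}$ tends to $+\infty$ (the ``upward'' type, with the $T$-projection ending at $\g$) or to $-\infty$ (the ``downward'' type, with the $T'$-projection ending at $\g'$). This produces three essentially distinct sub-cases, corresponding to the three cases (i)--(iii) of the statement.

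In the mixed sub-case, say $\ov{X_n}\to+\infty$ as $n\to+\infty$ and $\ov{X_n}\to-\infty$ as $n\to-\infty$, each ray may have a bounded initial ``detour'' (the descent $[x_0,z]$ for the upward ray's $T$-projection, and its mirror in the downward ray's $T'$-projection). Shifting the reference point $X_0$ sufficiently far along $\F$ in both directions absorbs both detours, after which the two rays concatenate into a bilateral geodesic of the form (i): $\f$ traverses the straight geodesic $(\o,\g)$ in $T$, $\f'$ traverses $(\g',\o')$ in $T'$, and $\phi$ runs monotonically along $\Z$.

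In the sub-case $\ov{X_n}\to-\infty$ at both ends, the height sequence $h_n=\ov{X_n}$ attains an absolute maximum $h$ at some vertex $X^*=(x,x')\in\F$, and this maximum is necessarily isolated since consecutive heights differ by exactly $\pm1$. Placing the reference at $X^*$, both rays are of the downward type and take their first step strictly downward, which by Proposition \ref{pr:ray} forces the corresponding detour points in their $T'$-projections to coincide with $x'$. Consequently the $T'$-projection $\f'$ is $(\g',x'][x',\g')$, while the $T$-projections of the two rays are $[x,\o_1)$ and $[x,\o_2)$ with $\o_1\neq\o_2\in\pt_\odot T$ (distinct because a geodesic does not backtrack at $X^*$) and $x\cw\o_i=x$ for $i=1,2$; the confluence $\o_1\cw\o_2$ is then $x$ itself, giving $\ov{\o_1\cw\o_2}=h$ as claimed. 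This is case (ii), and case (iii) is entirely symmetric under the interchange of $T$ and $T'$.

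The main obstacle is the ``no-detour'' property at the chosen reference point in each sub-case. In the mixed sub-case it rests on the observation that Proposition \ref{pr:ray} supplies a detour of bounded length which can always be absorbed by translating along $\F$; in the other sub-cases it follows from the extremality of $X^*$ together with the non-backtracking of geodesics. The converse direction --- that every path with projections of the form (i), (ii) or (iii) is actually a bilateral geodesic --- follows routinely by applying Bertacchi's formula \eqref{eq:dDL} to arbitrary finite sub-segments.
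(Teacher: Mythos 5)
Your overall strategy --- splitting a bilateral geodesic at a reference point into two one-sided rays and invoking \propref{pr:ray} --- is sound and genuinely different from the paper's, which obtains \propref{pr:ray} and \propref{pr:bil} together in one stroke, by letting the endpoints of the explicitly described finite geodesics from the proof of \propref{pr:distance} go to infinity. Your treatment of cases (ii) and (iii) is correct, and in fact tighter than you state: at an absolute maximum $X^*$ of the height both neighbouring steps are strictly descending (heights change by exactly $\pm 1$ along edges), so \propref{pr:ray} forces both one-sided rays to descend monotonically forever; this yields uniqueness of the maximum as a by-product (your ``isolated'' remark only excludes plateaux, not two separated maxima, but no separate argument is needed). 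The identification of the detour point with $x'$, the branching of the two $T$-projections at $x$ (via uniqueness of the ray $[x',\g')$ in $T'$ forcing the first steps in $T$ to differ), the equality $\ov{\o_1\cw\o_2}=h$, and the routine converse via \eqref{eq:dDL} all check out.

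The gap is in the mixed sub-case. The assertion that ``shifting the reference point $X_0$ sufficiently far along $\F$ in both directions absorbs both detours, after which the two rays concatenate into a bilateral geodesic of the form (i)'' is not a proof, and it points at the wrong mechanism: the detours are not absorbed, they must be shown to be absent, i.e.\ you must prove that the height is monotone along all of $\F$. Suppose the backward ray from $X_0$ ascends to a level $M>\ov{X_0}$ before descending while the forward ray descends to a level $m<\ov{X_0}$ before ascending. Each half is then a perfectly legitimate geodesic ray in the sense of \propref{pr:ray}, no choice of basepoint makes both halves simultaneously detour-free, and the concatenation is not geodesic: a computation with \eqref{eq:dDL} gives $d(X_{-n},X_n)=2n-2(M-m)<2n$ for large $n$, because each of the two tree projections backtracks across the strip between the levels $m$ and $M$. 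The one-line repair, available from your own setup, is to note that by \propref{pr:ray} the height profile of every geodesic ray changes direction at most once; since any two direction changes in the profile of $\F$ lie on a common sub-ray (take the basepoint far enough beyond them), the profile of a bilateral geodesic also changes direction at most once. This forces monotonicity in the mixed case, and in fact delivers the whole trichotomy (monotone, unique maximum, unique minimum) uniformly across all three cases.
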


\subsection{Regular sequences} \label{sec:reg}

\begin{defn}
A sequence of points $(x_n)$ in a connected graph $X$ is called \emph{regular}
if there exist a geodesic ray $\F$ (with the natural parameterization) and a
real number $\l\ge 0$ (the \emph{rate of escape}) such that
$$
d(x_n,\F(\l n)) = o(n) \;.
$$
If $\l=0$, then $(x_n)$ is called a \emph{trivial} regular sequence.
\end{defn}

This notion was introduced by Kaimanovich \cite{Kaimanovich89} by analogy with
the notion of Lyapunov regularity for sequences of matrices. Any non-trivial
regular sequence in a tree $T$ converges to a boundary point in the
compactification $\ov T= T \sqcup \pt T$ (e.g., see
\cite{Cartwright-Kaimanovich-Woess94}).

\begin{thm} \label{th:ray}
For a sequence of points $X_n=(x_n,x'_n)$ in the horospheric product of trees
$T\du T'$ the following conditions are equivalent:
\begin{itemize}
    \item [(i)]
The sequence $(X_n)$ is regular with the rate of escape $\l\ge 0$;
    \item [(ii)]
$d(X_n,X_{n+1})=o(n)$ and $\ov{X_n}= hn +o(n)$ for a constant (which we call
\emph{height drift}) $h$ with $|h|=\l$;
    \item [(iii)]
The sequences $(x_n)$ and $(x_n')$ are regular in the trees $T$ and $T'$,
respectively, with the same rate of escape $\l$.
\end{itemize}
\end{thm}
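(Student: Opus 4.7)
The plan is to prove the cyclic chain (i) $\Rightarrow$ (ii) $\Rightarrow$ (iii) $\Rightarrow$ (i). The implication (i) $\Rightarrow$ (ii) is essentially bookkeeping: from $d(X_n,\F(\l n))=o(n)$ the triangle inequality along $\F$ gives $d(X_n,X_{n+1})=o(n)$, and since $|\ov X-\ov Y|=|\B(X,Y)|\le d(X,Y)$ by \eqref{eq:height} and \eqref{eq:dineq}, the description of geodesic rays in \propref{pr:ray} (which shows that $\ov{\F(t)}=\pm t+O(1)$ eventually, with the sign depending on which of the two cases applies) gives $\ov{X_n}=\pm\l n+o(n)$, i.e., $h=\pm\l$. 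For (ii) $\Rightarrow$ (iii) I project to $T$ and $T'$. By \eqref{eq:dineq} and \eqref{eq:height}, (ii) immediately yields $d(x_n,x_{n+1}),\,d(x'_n,x'_{n+1})=o(n)$ and $b(x_n)=-hn+o(n)$, $b'(x'_n)=hn+o(n)$, and the tree identity $d(u,v)=2\ov{u\cw_\g v}-\ov u-\ov v$ combined with $d(x_n,x_{n+1})=o(n)$ pins down the pairwise confluence heights $\mu_n:=\ov{x_n\cw_\g x_{n+1}}=hn+o(n)$, with an analogous statement in $T'$ for $\cw_{\g'}$. It then remains to upgrade these to regularity of $(x_n)$ in $T$ and $(x'_n)$ in $T'$, each with rate $|h|$.

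The main obstacle lies in identifying the limiting ray in each tree and establishing sublinear deviation. The argument splits on the sign of $h$. When $(x_n)$ in $T$ heads toward $\g$ ($h>0$), I take $\F:=[o,\g)$ and show that the branching height $h'_n:=\ov{o\cw_\g x_n}$ satisfies $h'_n=hn+o(n)$. The lower bound $h'_n\ge\ov{x_n}$ is trivial; for the upper bound, a maximal stretch $[n_0,n_1]$ on which $h'$ takes the constant value $H$ must satisfy $H=hn_1+o(n_1)$ (from a case analysis of the transition at $n_1$ using $\mu_{n_1}=hn_1+o(n_1)$), and applying the distance formula to the step $x_{n_0-1}\to x_{n_0}$ entering the stretch forces the stretch length $L=n_1-n_0$ to be $o(n_0)$, whence $H=hn+o(n)$ uniformly on the stretch. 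Together with $h'_n\ge\ov{x_n}$ this gives $d(x_n,\F(hn))\le(h'_n-\ov{x_n})+|h'_n-hn|=o(n)$. When $(x_n)$ goes away from $\g$ ($h<0$) the ray is not known a priori; I define $\xi:=\lim_n\xi_n^{(k)}$ via the ancestors $\xi_n^{(k)}$ of $x_n$ at depth $k$ below $o$, which stabilize in $n$ as soon as $-k\ge\mu_n$. The ultrametric property $\ov{x_n\cw_\g x_m}\ge\min_{n\le j<m}\mu_j$ together with $\mu_n=hn+o(n)$ then ensures that for every $\e>0$ and $n$ large, the first $(|h|-\e)n$ down-steps of $[o,x_n]$ coincide with those of $[o,\xi)$, yielding $d(x_n,\F(|h|n))=o(n)$ for $\F:=[o,\xi)$. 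The case $h=0$ falls out of the same ``toward''-type estimate: $d(o,x_n)=2h'_n-\ov{x_n}=o(n)$, so $(x_n)$ is trivial regular. The analysis in $T'$ is symmetric, with the roles of ``toward'' and ``away'' swapped since $\ov{x'}=b'(x')$ while $\ov x=-b(x)$.

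For (iii) $\Rightarrow$ (i) I assemble the geodesic ray $\F_{DL}$ in $T\du T'$ from the rays $\F$ (in $T$) and $\F'$ (in $T'$) provided by (iii). The direction constraints imposed by the sign of $h$ in the previous paragraph guarantee that exactly one of $\F,\F'$ points at the mythological progenitor of its tree, placing us in one of the two cases of \propref{pr:ray}; this produces a geodesic ray $\F_{DL}$ starting at $O=(o,o')$ whose $T$- and $T'$-projections $\f,\f'$ eventually coincide with $\F,\F'$ up to bounded offsets from the initial ``dip'' identified in the proof of \propref{pr:distance}. Writing $\F_{DL}(\l n)=(\f(\l n),\f'(\l n))$ and applying \propref{pr:distance}, I conclude $d(X_n,\F_{DL}(\l n))\le d(x_n,\f(\l n))+d(x'_n,\f'(\l n))=o(n)+o(n)=o(n)$, which is exactly the regularity of $(X_n)$ with rate $\l=|h|$.
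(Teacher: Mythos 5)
Your proposal is correct in outline and reaches the right conclusions, but it takes a genuinely different route at the one step where the paper leans on a citation. The paper proves (i)$\Rightarrow$(ii) exactly as you do (it calls it obvious in view of \eqref{eq:dineq} and \propref{pr:ray}), and its (iii)$\Rightarrow$(i) is also essentially yours: for $\l=0$ use \eqref{eq:dDL}, and for $\l>0$ observe that since $\ov{x_n}=\ov{x'_n}$ exactly one of the two projected sequences converges to the distinguished end of its tree, then assemble the ray from \propref{pr:ray} and bound $d(X_n,\F(\l n))$ by the sum of the two tree distances via \eqref{eq:dDL}. The real divergence is (ii)$\Rightarrow$(iii): the paper dispatches it in one line by citing \cite[Proposition~1]{Cartwright-Kaimanovich-Woess94}, which states precisely the tree lemma you reprove from scratch (a sequence with $d(x_n,x_{n+1})=o(n)$ and $\ov{x_n}=hn+o(n)$ is regular with rate $|h|$). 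Your confluence-height analysis --- the quantities $\mu_n$, the branching heights $h'_n$ in the ``toward $\g$'' case, and the stabilized-ancestor construction of the limit end $\xi$ in the ``away'' case --- makes the theorem self-contained at the cost of length; the paper's citation buys brevity and delegates exactly this work.

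Two local corrections are needed. First, the inequality you call ``the ultrametric property'', $\ov{x_n\cw_\g x_m}\ge\min_{n\le j<m}\mu_j$, is false under the height convention: take $x_{n+2}=x_n$ hanging below a high branch point, so that $\mu_n=\mu_{n+1}$ are large while $\ov{x_n\cw_\g x_{n+2}}=\ov{x_n}$ is small. The true statement --- and the one your conclusion actually requires --- is the reverse, $\ov{x_n\cw_\g x_m}\le\max_{n\le j<m}\mu_j$, which is the standard Gromov-product inequality $(x_n|x_m)_\g\ge\min_j(x_j|x_{j+1})_\g$ after the sign flip $(u|v)_\g=-\ov{u\cw_\g v}$; you evidently translated the ``$\ge\min$'' form without flipping signs. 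With $h<0$ it gives $\ov{x_n\cw_\g\xi}\le hn+o(n)$, which together with the trivial bound $\ov{x_n\cw_\g\xi}\ge\ov{x_n}$ is exactly the deep-merge estimate you want. The same corrected inequality also streamlines your $h>0$ case: by induction, $h'_n\le\max\bigl(h'_0,\max_{j<n}\mu_j\bigr)=hn+o(n)$ directly, so the maximal-stretch bookkeeping is unnecessary (your stretch argument does close, but only after noting that the entry value $H$ is sandwiched between $\ov{x_{n_0}}$ and $\mu_{n_0-1}$, which is more work than needed). Second, in (iii)$\Rightarrow$(i) you appeal to ``the direction constraints imposed by the sign of $h$ in the previous paragraph'', but in a cyclic proof no $h$ is available when proving (iii)$\Rightarrow$(i); rederive the dichotomy from (iii) alone, as the paper does: if $\l>0$ and both sequences converged to non-distinguished (or both to distinguished) ends, the forced height asymptotics $\ov{x_n}=\mp\l n+o(n)$ and $\ov{x'_n}=\pm\l n+o(n)$ would contradict $\ov{x_n}=\ov{x'_n}$.
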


\begin{proof}
(i)$\implies$(ii). Obvious in view of inequalities \eqref{eq:dineq} and the
description of geodesic rays in $T\du T'$ from \propref{pr:ray}.

\medskip

(ii)$\implies$(iii). By \eqref{eq:dineq} condition (ii) for the sequence
$(X_n)$ implies that the analogous condition is satisfied for its projections
$(x_n)$ and $(x'_n)$ to the trees $T$ and~$T'$, respectively, i.e.,
$$
d(x_n,x_{n+1})=o(n)\;, \quad d(x'_n,x'_{n+1})=o(n)\;, \quad
\ov{x_n}=\ov{x'_n}=hn+o(n) \;.
$$
Then by \cite[Proposition~1]{Cartwright-Kaimanovich-Woess94} the sequences
$(x_n),(x'_n)$ are both regular with the rate of escape $|h|$.

\medskip

(iii)$\implies$(i). If $\ell=0$, then $(X_n)$ is a trivial regular sequence by
formula \eqref{eq:dDL}. If $\ell>0$, then both $(x_n)$ and $(x'_n)$ are
non-trivial regular sequences. Since $\ov{x_n}=\ov{x'_n}$, one of these
sequences converges to the distinguished boundary point of the corresponding
tree, whereas the other sequence converges to a ``plain'' boundary point. For
instance, let $\lim x_n=\g$ and $\lim x'_n=\o'\in\pt_\odot T'$ (which
corresponds to positivity of the height drift $h$). Take the geodesic ray $\F$
in $T\du T'$ with the projections
$$
\f = [o,z]\, [z,\g) \;, \qquad \f'=[o',\o') \;,
$$
where $\ov z = \ov{o'\cw\o'}$ (cf. \propref{pr:ray}), then $d(X_n,\F(\l
n))=o(n)$.
\end{proof}

\subsection{Boundaries of horospheric products} \label{sec:bdry}

For the horospheric product $T\du T'$ there is a natural compactification
\begin{equation} \label{eq:comp}
\ov{T\du T'}=T\du T'\cup\pt (T\du T')
\end{equation}
obtained by embedding $T\du T'$ into the product $T\times T'$ and further
taking the closure in $\ov T\times \ov{T'}$, where $\ov T$ and $\ov{T'}$ are
the canonical compactifications of the trees $T$ and $T'$, respectively. One
can easily check (see \cite[Proposition 3.2]{Bertacchi01} for details) that
the boundary of this compactification is
$$
\pt(T\du T') = \left(\{\g\}\times{\ov{T'}}\right) \cup \left({\ov
T}\times\{\g'\}\right) \;.
$$
Let
$$
\pt_\uparrow (T\du T') = \{\g\}\times \pt_\odot T' \subset \pt(T\du T')
$$
and
$$
\pt_\downarrow (T\du T') = \pt_\odot T\times \{\g'\} \subset \pt(T\du T')
$$
be, respectively, the \emph{upper} and the \emph{lower} boundaries of the
horospheric product $T\du T'$. Similar pairs of boundaries arise for the
dyadic-rational affine group \cite{Kaimanovich91} or for \emph{treebolic
spaces} \cite{Bendikov-SaloffCoste-Salvatori-Woess11}.

\propref{pr:ray} and \thmref{th:ray} imply

\begin{prop} \label{pr:conv}
A non-trivial regular sequence in $T\du T'$ converges in the compactification
\eqref{eq:comp} either to a point from $\pt_\uparrow (T\du T')$ (if the height
drift is positive) or to a point from $\pt_\downarrow (T\du T')$ (if the
height drift is negative).
\end{prop}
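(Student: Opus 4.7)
The plan is to reduce everything to the two tree projections and then read off the limit from the sign of the height drift using the data supplied by \thmref{th:ray}. Let $(X_n)$ with $X_n=(x_n,x'_n)$ be a non-trivial regular sequence in $T\du T'$. By part (iii) of \thmref{th:ray} the projected sequences $(x_n)$ and $(x'_n)$ are themselves non-trivial regular sequences in $T$ and $T'$ with a common rate of escape $\l>0$. The classical fact recalled just before \thmref{th:ray} (proved in \cite{Cartwright-Kaimanovich-Woess94}) then guarantees that each projection converges in the corresponding tree compactification: $x_n\to\xi$ in $\ov T$ and $x'_n\to\xi'$ in $\ov{T'}$ for some $\xi\in\pt T$ and $\xi'\in\pt T'$. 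Since the compactification \eqref{eq:comp} is by construction the closure of $T\du T'$ inside $\ov T\times\ov{T'}$, this automatically upgrades to $X_n\to(\xi,\xi')$ in $\ov{T\du T'}$.

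To identify $\xi$ and $\xi'$ I would use the sign of the height drift $h$ coming from part (ii) of \thmref{th:ray}. On the $T$-side the Busemann function $b_\g$ tends to $-\infty$ along any sequence converging to $\g$ and to $+\infty$ along any sequence converging to a point of $\pt_\odot T$; together with the convention $\ov x=-b(x)$ this means $\ov{x_n}\to+\infty$ precisely when $\xi=\g$ and $\ov{x_n}\to-\infty$ precisely when $\xi\in\pt_\odot T$. The analogous dichotomy on $T'$ comes with the opposite sign because of the convention $\ov{x'}=b'(x')$: convergence of $x'_n$ to $\g'$ gives $\ov{x'_n}\to-\infty$, while convergence to a point of $\pt_\odot T'$ gives $\ov{x'_n}\to+\infty$. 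Combining these with the constraint $\ov{X_n}=\ov{x_n}=\ov{x'_n}=hn+o(n)$ finishes the proof: if $h>0$ then both $\ov{x_n}$ and $\ov{x'_n}$ tend to $+\infty$, forcing $\xi=\g$ and $\xi'\in\pt_\odot T'$ and hence $(\xi,\xi')\in\{\g\}\times\pt_\odot T'=\pt_\uparrow(T\du T')$; symmetrically, if $h<0$ then $\xi\in\pt_\odot T$ and $\xi'=\g'$, placing the limit in $\pt_\downarrow(T\du T')$.

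There is no real obstacle here. The only point that requires a bit of care is the bookkeeping of signs, reflecting the fact that the two factors in a horospheric product are oriented in opposite directions with respect to their distinguished boundary points; once this is straight the proposition is a direct composition of parts (ii)--(iii) of \thmref{th:ray}, the convergence of non-trivial regular tree sequences to boundary points, and the definition of the compactification \eqref{eq:comp}.
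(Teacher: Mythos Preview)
Your argument is correct and essentially coincides with what the paper has in mind: the paper records the proposition as an immediate consequence of \propref{pr:ray} and \thmref{th:ray} without further detail, and your write-up is precisely an unpacking of that implication (indeed, the dichotomy you use --- one projection converges to the distinguished boundary point, the other to a plain one --- already appears verbatim inside the proof of \thmref{th:ray}, implication (iii)$\Rightarrow$(i)).

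One point of phrasing deserves tightening. The assertion that ``$b_\g$ tends to $-\infty$ along \emph{any} sequence converging to $\g$'' is false for general sequences: one can have $x_n\to\g$ in $\ov T$ while $b_\g(x_n)$ stays bounded or even tends to $+\infty$ (let $x_n$ descend from a point high up on $[o,\g)$ through a different child). What is true --- and what your argument actually needs --- is the statement for \emph{regular} sequences: if $(x_n)$ is regular with rate $\l>0$ and limit $\g$, then it tracks a ray $[\cdot,\g)$ along which $\ov{\,\cdot\,}$ grows linearly, whence $\ov{x_n}\to+\infty$. The companion claim (that $b_\g\to+\infty$ along any sequence converging to a point of $\pt_\odot T$) happens to hold without the regularity hypothesis, since such a sequence eventually has confluence $o\cw_\g x_n$ pinned at the fixed point $o\cw_\g\xi$. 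With this qualification your biconditional is valid and the rest goes through unchanged.
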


\begin{rem}
It is \emph{not} true (unlike in the tree case) that \emph{any} boundary point
is the limit of a certain non-trivial regular sequence. It might be an
instructive exercise to look at the \emph{Busemann compactification} of the
horospheric product $T\du T'$ (which should not be difficult in view of the
explicit descriptions of geodesics in $T\du T'$ obtained in
\secref{sec:geod}).
\end{rem}

\propref{pr:bil} describes which pairs of boundary points can be joined with a
bilateral geodesic in $T\du T'$ (which is necessarily unique, as it follows
from \propref{pr:bil}). In particular,

\begin{cor} \label{cor:bil}
For any pair of boundary points from $\pt_\downarrow (T\du
T')\times\pt_\uparrow (T\du T')$ there exists a unique bilateral geodesic in
$T\du T'$ joining these points.
\end{cor}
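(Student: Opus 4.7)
The plan is to derive the corollary directly from \propref{pr:bil}. Fix $W=(\o,\g')\in\pt_\downarrow(T\du T')$ and $W'=(\g,\o')\in\pt_\uparrow(T\du T')$ with $\o\in\pt_\odot T$ and $\o'\in\pt_\odot T'$; the task is to exhibit a unique bilateral geodesic in $T\du T'$ having $W$ and $W'$ as its endpoints in the compactification $\ov{T\du T'}$.

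First I would rule out classes (ii) and (iii) of \propref{pr:bil} by tracking the height function at infinity. In class (ii) the projection $\phi$ to $\Z$ tends to $-\infty$ in both directions, so the first coordinate converges to a point of $\pt_\odot T$ while the second coordinate converges to $\g'$ at both ends; hence both limits of the geodesic lie in $\pt_\downarrow(T\du T')$. Class (iii) is dual and places both limits in $\pt_\uparrow(T\du T')$. Only class (i), in which $\phi$ coincides with $\Z$ traversed monotonically, can produce one limit in $\pt_\downarrow$ and the other in $\pt_\uparrow$, so the geodesic we seek must be of this type.

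Next I would produce the unique candidate of class (i). Orient $\phi$ so that $\ov{X_n}\to-\infty$ at the $W$-end and $\ov{X_n}\to+\infty$ at the $W'$-end. The projection $\f$ to $T$ is then forced to be the bilateral tree geodesic $(\o,\g)$, and the projection $\f'$ to $T'$ the bilateral tree geodesic $(\g',\o')$; each is the unique bilateral geodesic joining a prescribed pair of distinct points of the boundary of a tree. Since $\o\neq\g$ and $\o'\neq\g'$, both bilateral geodesics are strictly monotone in height and meet each integer level in exactly one vertex; writing $x_k$ and $x'_k$ for these, the sequence $X_k=(x_k,x'_k)$ satisfies $\ov{x_k}=\ov{x'_k}=k$, lies in $T\du T'$, and consecutive $X_k,X_{k+1}$ differ by a single father/son step in each coordinate, so $(X_k)_{k\in\Z}$ is an edge path in the sense of \defref{def:DL} and a bilateral geodesic by the class (i) description. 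Uniqueness is then immediate from \propref{pr:bil}: any other bilateral geodesic between $W$ and $W'$ must be of class (i) with the same tree projections and the same height at each integer level, hence must coincide with $(X_k)$.

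The substantive content is entirely in \propref{pr:bil}; the remainder is bookkeeping. The only subtle point is matching orientations so that the two ends of $\phi$ correspond to the lower and upper endpoints as prescribed rather than being swapped, and this is immediate from the description of the compactification in \secref{sec:bdry}, where $\ov{X_n}\to-\infty$ picks out $\pt_\downarrow(T\du T')$ and $\ov{X_n}\to+\infty$ picks out $\pt_\uparrow(T\du T')$.
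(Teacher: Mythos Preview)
Your argument is correct and follows exactly the route the paper intends: the corollary is stated immediately after the remark that \propref{pr:bil} describes which boundary pairs can be joined by a bilateral geodesic (necessarily unique), and the paper gives no further proof. You have simply spelled out the case analysis---ruling out classes (ii) and (iii) by their height behaviour and exhibiting the class (i) geodesic from the unique bilateral tree geodesics $(\o,\g)$ and $(\g',\o')$---that the paper leaves implicit.
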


\section{Random horospheric products} \label{sec:random}

\subsection{Graphed equivalence relations and random graphs} \label{sec:eq}

In the present article we shall consider random graphs from the point of view
of the theory of graphed measured equivalence relations. Let us remind the
basic definitions (see \cite{Feldman-Moore77,Adams90,Kaimanovich97}).

Let $(\X,\mu)$ be a Lebesgue measure space (below all the properties related
to measure spaces will be understood \emph{mod~0}, i.e., up to measure 0
subsets). A \emph{partial transformation} of $(\X,\mu)$ is a measure class
preserving bijection between two measurable subsets of $\X$. An equivalence
relation $R\subset\X\times\X$ is called \emph{discrete measured} if it is
generated by an at most countable family of partial transformations. Then
there exists a multiplicative \emph{Radon--Nikodym cocycle}
$\D=\D_\mu:R\to\R_+$ such that for any partial transformation $f:A\to B$ whose
graph is contained in $R$
$$
\D(x,y) = \frac{d\,f^{-1}\mu}{d\mu} (x) = \frac{d\mu}{d\,f\mu}(y) \;.
$$
Alternatively, the Radon--Nikodym cocycle can be defined as the Radon--Nikodym
ratio of the left and the right counting measures on $R$:
$$
\D(x,y) = \frac{d\ch\Mu}{d\Mu} (x,y) \;,
$$
where the \emph{left counting measure} $\Mu$ on $R$ is the result of
integration of the counting measures $\Nu_x$ on the classes of the equivalence
relation (considered as the fibers of the \emph{left projection} $\pi:(x,y)\to
x$ from $R$ onto $\X$) against the measure $\mu$ on the state space $\X$:
$$
d\Mu(x,y) = d\mu(x) d\Nu_x(y) = d\mu (x) \;,
$$
and the \emph{right counting measure} $\ch\Mu$ is the image of the left one
under the \emph{involution}
$$
(x,y)\mapsto (y,x) \;.
$$
If the Radon--Nikodym cocycle $\D$ is identically 1, then the measure $\mu$ is
called \emph{$R$-invariant} ($\equiv$ the equivalence relation $R$
\emph{preserves} the measure $\mu$).

A (non-oriented) \emph{graph structure} on a discrete measured equivalence
relation $(\X,\mu,R)$ is determined by a measurable symmetric subset $K\subset
R\setminus\diag$. The result of the restriction of this graph structure to an
equivalence class $[x]$ gives the \emph{leafwise graph} denoted by~$[x]^K$ (by
analogy with the theory of foliations classes of a discrete equivalence
relation are often called \emph{leaves}). We shall call $(\X,\mu,R,K)$ a
\emph{graphed equivalence relation}. We shall always deal with the graph
structures which are \emph{locally finite}, i.e., any vertex has only finitely
many neighbours, and denote by $\deg$ the integer valued function which
assigns to any point $x\in \X$ the degree (valency) of $x$ in the graph
$[x]^K$. We shall also always assume that the graph structure is
\emph{leafwise connected}, i.e., a.e.\ leafwise graph $[x]^K$ is connected.
Let us denote by $[x]^K_\bullet=([x]^K,x)$ the graph $[x]^K$ rooted at the
point $x$. Thus, we have the map $x \mapsto [x]^K_\bullet$ from $\X$ to the
space of connected locally finite rooted graphs $\GG$ endowed with the usual
ball-wise convergence topology. In particular, if $\mu$ is a probability
measure, then its image under the above map is a probability measure on the
space of rooted graphs $\GG$, i.e., a \emph{random rooted graph}.

\subsection{Random walks on equivalence relations} \label{sec:rweq}

\begin{defn}[\cite{Kaimanovich98}]
A \emph{random walk along equivalence classes} of a discrete measured
equivalence relation $(\X,\mu,R)$ is determined by a measurable family of
leafwise transition probabilities $\{\pi_x\}_{x\in X}$, so that any $\pi_x$ is
concentrated on the equivalence class of $x$, and
$$
(x,y)\mapsto p(x,y)=\pi_x(y)
$$
is a measurable function on $R\subset\X\times\X$. By
$$
p^n(x,y) = \pi_x^n(y) \;, \qquad n\ge 1 \;,
$$
we shall denote the corresponding $n$-step transition probabilities which are
then also measurable as a function on $R$.
\end{defn}

Since the measure class of $\mu$ is preserved by the equivalence relation $R$,
the associated Markov operator $P$ on the space $L^\infty(\X,\mu)$ is
well-defined (cf. \propref{pr:deriv} below). The dual operator then acts on
the space of measures $\la$ absolutely continuous with respect to $\mu$
(notation: $\la\prec\mu$). Following a probabilistic tradition, we shall
denote this action by $\la\mapsto\la P$. The density of the measure $\la P$
with respect to $\mu$ can be explicitly described in terms of the density of
$\la$ and of the Radon--Nikodym cocycle $\D=\D_\mu$ of the measure $\mu$.

\begin{prop}[\cite{Kaimanovich98}] \label{pr:deriv}
For any $\si$-finite measure $\la\prec\mu$
\begin{equation} \label{eq:dens}
\frac{d\la P}{d\mu}(y) = \sum_{x\in [y]} p(x,y) \, \D(y,x) \,
\frac{d\la}{d\mu}(x) \;.
\end{equation}
\end{prop}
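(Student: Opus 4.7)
The plan is to compute $\la P$ from its defining duality $\int f\,d(\la P) = \int (Pf)\,d\la$ and then convert the resulting integral over $R$ against the left counting measure $\Mu$ into one against the right counting measure $\ch\Mu$ using the Radon--Nikodym cocycle $\D$. Since everything in sight will be non-negative, it is enough to verify the integration identity against indicator test functions $f = \mathbf 1_A$ with $\mu(A) < \infty$ and apply Tonelli, so the $\sigma$-finiteness of $\la$ causes no trouble; the Radon--Nikodym derivative is determined by a local integration identity.

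First I make the Markov operator explicit: for bounded measurable $f$, $(Pf)(x) = \sum_{y\in[x]} p(x,y) f(y)$. Using the disintegration $d\Mu(x,y) = d\mu(x)\,d\Nu_x(y)$ along the left projection,
$$
\int f\,d(\la P) \;=\; \int (Pf)\,d\la \;=\; \int_R p(x,y)\,f(y)\,\frac{d\la}{d\mu}(x)\,d\Mu(x,y).
$$
To re-express this as an integral against $d\mu(y)$, I change the ambient measure on $R$ from $\Mu$ to $\ch\Mu$, where $d\ch\Mu(x,y) = d\mu(y)\,d\Nu_y(x)$. Since $\D(x,y) = d\ch\Mu/d\Mu\,(x,y)$ by definition, I obtain $d\Mu(x,y) = \D(x,y)^{-1}\,d\ch\Mu(x,y) = \D(y,x)\,d\ch\Mu(x,y)$; the second equality is the multiplicative cocycle identity $\D(x,y)\D(y,x) = 1$, which is immediate from the partial-transformation description $\D(x,y) = d(g^{-1}\mu)/d\mu\,(x)$ applied to a partial transformation $g$ sending $x$ to $y$ together with its inverse.

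Substituting and disintegrating along the right projection then gives
$$
\int f\,d(\la P) \;=\; \int_\X f(y) \left[\,\sum_{x\in[y]} p(x,y)\,\D(y,x)\,\frac{d\la}{d\mu}(x)\right] d\mu(y),
$$
and because $f$ was arbitrary, the bracket must be $d\la P/d\mu$, which is exactly \eqref{eq:dens}.

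The main (and essentially only) obstacle is bookkeeping: keeping track of which variable is being summed over at each stage and applying the cocycle $\D$ in the correct direction, rather than any substantive analytic difficulty. Measurability of the integrands on $R$ is immediate from the standing measurability of $p$ and of $\D$, and there are no convergence issues because all terms are non-negative and can be handled by Tonelli throughout.
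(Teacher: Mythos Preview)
Your proof is correct and rests on the same core idea as the paper's: rewrite the relevant integral over $R$ against the left counting measure $\Mu$, then pass to the right counting measure $\ch\Mu$ via the Radon--Nikodym cocycle $\D$ and disintegrate along the right projection.

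The presentational difference is that the paper frames everything probabilistically: it introduces the joint time-$0$/time-$1$ distribution $d\Pi(x,y)=d\la(x)\,p(x,y)$, computes $d\Pi/d\Mu$ and $d\Pi/d\ch\Mu$, and expresses the latter through the \emph{cotransition probabilities} $\ch p(y,x)$ before summing over $x\in[y]$. Your argument via duality with test functions and Tonelli is slightly leaner, since you never name $\Pi$ or $\ch p$. The paper's route has the advantage that the cotransition formula $\ch p(y,x)=p(x,y)\,\D_\la(y,x)$ drops out as a byproduct and is reused immediately afterwards in the discussion of reversibility; your route gets to \eqref{eq:dens} with one fewer concept. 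Neither approach involves anything the other doesn't already implicitly contain.
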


\begin{proof}
Let us run the Markov chain determined by the operator $P$ from the initial
(time~0) distribution $\la$. Then the time 1 distribution is, by definition,
the measure $\la P$, and the joint distribution of the positions of the chain
at times 0 and~1 is the measure
\begin{equation} \label{eq:Pi}
d\Pi(x,y) = d\la(x) \, p(x,y) \;,
\end{equation}
which is obviously absolutely continuous with respect to the counting measure
$\Mu$. The corresponding Radon--Nikodym derivative is
\begin{equation} \label{eq:trans}
\frac{d\Pi}{d\Mu}(x,y) = \frac{d\la}{d\mu}(x)\, p(x,y) \;.
\end{equation}
Since the left and the right counting measures are equivalent,
\begin{equation} \label{eq:tr}
\frac{d\Pi}{d\ch\Mu}(x,y) = \frac{d\Pi}{d\Mu}(x,y) \,
\frac{d\Mu}{d\ch\Mu}(x,y) = \frac{d\la}{d\mu}(x)\, p(x,y) \, \D(y,x) \;.
\end{equation}
On the other hand, since $\la P$ is the result of the right projection of the
measure $\Pi$ to $\X$,
\begin{equation} \label{eq:cotr}
\frac{d\Pi}{d\ch\Mu}(x,y) = \frac{d\la P}{d\mu}(y)\,\ch p(y,x) \;,
\end{equation}
where $\ch p(\cdot,\cdot)$ are the corresponding \emph{cotransition
probabilities} (cf. formula \eqref{eq:trans}), whence summation of
\eqref{eq:tr} over $x\in [y]$ yields the claim.
\end{proof}

\begin{rem}
If the measure $\la$ is infinite, then the density from formula
\eqref{eq:dens} may well be infinite on a set of positive measure $\mu$;
however, even in this case the measure $\la P$ is absolutely continuous with
respect to $\mu$ in the sense that any null set of $\mu$ is also null with
respect to $\la P$.
\end{rem}

Comparison of \eqref{eq:trans} and \eqref{eq:cotr} leads to the following
useful formula relating transition and cotransition probabilities:
$$
\frac{d\la}{d\mu}(x)p(x,y) = \frac{d\la P}{d\mu}(y)\ch p(y,x) \D(x,y) \;,
$$
or, in a somewhat informal way,
$$
d\la(x) p(x,y) = d\la P(y) \ch p(y,x) \;,
$$
which is what one could expect.

Given a measure $\la\prec\mu$, we denote by $\{\la_x\}_{x\in\X}$ the family of
leafwise measures on the equivalence classes of $\X$ defined as
\begin{equation} \label{eq:la}
\la_x(y) = \frac{d\la(y)}{d\mu(x)} = \frac{d\la}{d\mu}(y) \D(x,y) \;.
\end{equation}
The measures $\la_x$ corresponding to different equivalent points $x$ are
obviously all proportional.

\begin{cor} \label{cor:stat}
A measure $\la\prec\mu$ is $P$-stationary (i.e., $\la=\la P$) if and only if
the leafwise measures $\la_x$ \eqref{eq:la} are almost surely stationary with
respect to the transition probabilities $p(\cdot,\cdot)$.
\end{cor}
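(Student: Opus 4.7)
The plan is to reduce both conditions to the same pointwise equality and then handle the null-set bookkeeping. By \propref{pr:deriv}, $\la = \la P$ is equivalent to the $\mu$-a.e.\ identity
$$
\frac{d\la}{d\mu}(z) = \sum_{x\in [z]} p(x,z)\,\D(z,x)\,\frac{d\la}{d\mu}(x),
$$
so I only need to show that the leafwise stationarity condition for the family $\{\la_x\}$ reduces to exactly the same identity.

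First I would write out what leafwise stationarity of $\la_y$ at a point $z\in[y]$ means, namely $\la_y(z) = \sum_{x\in[y]} \la_y(x)\,p(x,z)$. Substituting the defining formula \eqref{eq:la}, $\la_y(z) = \frac{d\la}{d\mu}(z)\D(y,z)$ and similarly for $x$, this becomes
$$
\frac{d\la}{d\mu}(z)\,\D(y,z) = \sum_{x\in[y]} \frac{d\la}{d\mu}(x)\,\D(y,x)\,p(x,z).
$$
Now I invoke the multiplicative cocycle identity $\D(y,x) = \D(y,z)\D(z,x)$, pull the common factor $\D(y,z)$ out of the sum and cancel it from both sides. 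What remains is precisely
$$
\frac{d\la}{d\mu}(z) = \sum_{x\in[z]} p(x,z)\,\D(z,x)\,\frac{d\la}{d\mu}(x),
$$
the identity from \propref{pr:deriv}. In particular this cancellation makes transparent the remark that the measures $\la_x$ on a common class differ only by a positive constant: leafwise stationarity at $z$ does not depend on the choice of reference point $y\in[z]$, so the statement ``$\la_x$ is stationary for a.e.\ $x$'' is unambiguous.

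The only real care needed is with null sets. On one side we have an $\mu$-a.e.\ equality of functions on $\X$; on the other we have leafwise stationarity ``almost surely in $x$''. Since the equivalence relation is discrete, each class is countable, and the standard argument (equivalently: disintegration of the left counting measure $\Mu$ along the left projection) shows that a property of $z$ valid for $\mu$-a.e.\ $z$ is, after saturation, valid at every point of $[y]$ simultaneously for $\mu$-a.e.\ $y$, and vice versa. This is the only mildly delicate step; the rest is the direct algebraic manipulation above. After this, both directions of the ``if and only if'' follow at once from the pointwise equivalence of the two forms of the stationarity equation.
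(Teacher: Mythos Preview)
Your argument is correct and is exactly the intended one: the paper states the result as an immediate corollary of \propref{pr:deriv} without a separate proof, and what you have written is precisely the direct unpacking of that implication via the cocycle identity and cancellation of the common factor $\D(y,z)$. There is nothing to add.
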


If the measure $\la$ is stationary, then, as it follows from a comparison of
formulas \eqref{eq:tr} and \eqref{eq:cotr}, the cotransition probabilities
$$
\ch p(y,x) = p(x,y) \D_\la(y,x) \,\;,
$$
where $\D_\la$ is the Radon--Nikodym cocycle of the measure $\la$, determine
another Markov chain along equivalence classes with the same stationary
measure $\la$. It is called the \emph{time reversal} of the original random
walk. If it coincides with the original walk, then the latter is called
\emph{reversible}.

If the equivalence relation $(\X,\mu,R)$ is endowed with a graph structure
$K$, then the transition probabilities
$$
p(x,y) = \left\{%
\begin{array}{ll}
    1/\deg x\;, & \hbox{$(x,y)\in K$\;} \\
    0\;, & \hbox{otherwise\;.} \\
\end{array}%
\right.
$$
determine the \emph{simple random walk} along the classes of the equivalence
relation $R$.

\begin{cor} \label{cor:simple}
Let $(\X,\mu,R,K)$ be a graphed equivalence relation. If the measure $\mu$ is
$R$-invariant, then the measure $\la=\deg\cdot\mu$ is stationary with respect
to the simple random walk along the classes of $R$ (i.e., $\la=\la P$, where
$P$ is the Markov operator of the simple random walk).
\end{cor}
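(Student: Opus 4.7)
My plan is to deduce \corref{cor:simple} as an immediate consequence of \propref{pr:deriv}, using the two hypotheses of the corollary in turn to collapse the density formula \eqref{eq:dens}.

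First I would fix a leafwise neighbour pair $(x,y)\in K$ and write down $p(x,y)=1/\deg x$ and $p(x,y)=0$ off $K$, so that the sum in \eqref{eq:dens} is supported on the (finitely many) neighbours of $y$ in the leafwise graph $[y]^K$. Next, since $\mu$ is $R$-invariant, its Radon--Nikodym cocycle satisfies $\D\equiv 1$ on $R$; and by definition $d\la/d\mu=\deg$. Plugging these three facts into \eqref{eq:dens} gives
$$
\frac{d\la P}{d\mu}(y) \;=\; \sum_{x\,:\,(x,y)\in K} \frac{1}{\deg x}\cdot 1 \cdot \deg x \;=\; \deg y \;=\; \frac{d\la}{d\mu}(y),
$$
which is stationarity $\la P=\la$. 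That is essentially the whole argument; there is no genuine obstacle, as both the vanishing of the Radon--Nikodym cocycle under $R$-invariance and the identification $d\la/d\mu=\deg$ are built into the set-up.

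As a sanity check I would also verify the statement through \corref{cor:stat}: the leafwise measures $\la_x$ defined in \eqref{eq:la} reduce, under $\D\equiv 1$, to $y\mapsto\deg y$ on each class (up to a multiplicative constant depending on the base point $x$). Stationarity of the degree measure for the simple random walk on a locally finite connected graph is the classical detailed-balance identity $\deg x\cdot p(x,y)=\deg y\cdot p(y,x)=\mathbf 1_K(x,y)$, so \corref{cor:stat} furnishes the conclusion along each leaf and hence globally. The only thing worth being careful about is local finiteness of the graph structure, which is part of the standing assumptions on $K$ in \secref{sec:eq} and ensures that the sum in \eqref{eq:dens} is finite, so no integrability subtleties arise.
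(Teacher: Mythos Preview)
Your argument is correct and is precisely the reasoning the paper has in mind: \corref{cor:simple} is stated without proof as an immediate consequence of \propref{pr:deriv} (equivalently of \corref{cor:stat}), and both of your derivations reproduce that. One small wording slip: you speak of the ``vanishing'' of the Radon--Nikodym cocycle, but of course you mean $\D\equiv 1$, as you correctly use in the computation.
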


\begin{rem}
Actually, $R$-invariance of the measure $\mu$ is precisely equivalent to the
combination of the above stationarity condition with \emph{reversibility} of
the leafwise simple random walk with respect to the measure $\deg\cdot\mu$,
see \cite[Proposition 2.4.1 and its Corollary]{Kaimanovich98}.
\end{rem}

\subsection{Entropy and leafwise Poisson boundaries} \label{sec:entr}

Below we shall be interested in describing the \emph{Poisson boundary} of
Markov chains along individual classes of an equivalence relation. We remind,
without going into details, that the Poisson boundary is responsible for
describing the stochastically significant behaviour of a Markov chain at
infinity. The main tools used for its identification are the general
\emph{0--2 laws} and the \emph{entropy theory} (see \cite{Kaimanovich92} and
the references therein). The latter one is especially expedient when dealing
with random walks on groups \cite{Kaimanovich-Vershik83,Kaimanovich00a}.

As it was on numerous occasions mentioned by the first author (e.g., see
\cite{Kaimanovich86,Kaimanovich88,Kaimanovich90a,Kaimanovich98}), the entropy
theory is also applicable in all situations when there is an appropriate
probability path space endowed with a measure preserving time shift. The most
general currently known setup is provided by \emph{random walks on groupoids}
\cite{Kaimanovich05a} with a finite stationary measure on the space of
objects. A particular case of it consists of Markov chains along classes of an
equivalence relation in the presence of a global stationary probability
measure \cite{Kaimanovich98}, and, as it was pointed out in
\cite{Kaimanovich98}, the entropy theory is perfectly applicable in this
situation, providing criteria for triviality and identification of the Poisson
boundary of leafwise random walks (also see \cite{Kaimanovich03a} where this
theory was used for describing the Poisson boundary on the graphed equivalence
relations associated with the fractal limit sets of iterated function
systems). In other particular cases (most of which can actually be completely
described in terms of random walks on equivalence relations) the entropy
theory was along the same lines implemented in \cite{Kaimanovich-Woess02} (for
Markov chains with a transitive group of symmetries),
\cite{AlcaldeCuesta-FernandezdeCordoba11} (for random walks along orbits of
pseudogroups acting on a measure space), \cite{Bowen10p} (for random walks on
random Schreier graphs), \cite{Benjamini-Curien10p} (for simple random walks
on unimodular and stationary random graphs).

Since \cite{Kaimanovich98} and \cite{Kaimanovich03a} contain only a brief
outline of the entropy theory for random walks along equivalence relations, we
shall give more details here (although these arguments are essentially the
same as in the case of random walks in random environment
\cite{Kaimanovich90a}). For the rest of this section we shall assume that
$(\X,\mu,R)$ is a discrete measured equivalence relation endowed with a Markov
operator $P$ determined by a measurable family of transition probabilities
$\pi_x$, and that $\la\prec\mu$ is a $P$-stationary probability measure.
Denote by $\P_x$ the probability measure in the space of paths of the
associated leafwise Markov chain issued from a point $x\in\X$. The
one-dimensional distributions of $\P_x$ are the $n$-step transition
probabilities $\pi_x^n$ from the point $x$.

\begin{thm} \label{th:02}
For $\la$-a.e.\ point $x\in\X$ the tail and the Poisson boundaries of the
leafwise Markov chain coincide $\P_x$ -- \textup{mod 0}.
\end{thm}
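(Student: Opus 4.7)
The plan is to transfer the problem to the global path space $(\Omega,\P_\la)$, where $\Omega=\X^{\N_0}$ and $\P_\la=\int_\X\P_x\,d\la(x)$. By \corref{cor:stat}, $P$-stationarity of $\la$ is exactly the statement that the one-step shift $T$ on $\Omega$ preserves $\P_\la$, so that $(\Omega,\P_\la,T)$ is a measure preserving system. The tail and Poisson boundaries of the leafwise chain are then realized as the quotients of $\Omega$ by the tail $\sigma$-algebra $\mathcal{T}=\bigcap_n\sigma(x_n,x_{n+1},\dots)$ and by the $T$-invariant $\sigma$-algebra $\mathcal{I}=\{A:T^{-1}A=A\}$, respectively; the inclusion $\mathcal{I}\subseteq\mathcal{T}$ is immediate, since any $T$-invariant set belongs to every $\sigma(x_n,x_{n+1},\dots)$.

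For the reverse inclusion mod $\P_x$, I would combine the Markov property with forward martingale convergence. Given a bounded $\mathcal{T}$-measurable $f$, the Markov property (conditional independence of past and future given the present, together with tail-measurability of $f$) yields $E_\la[f\mid\mathcal{F}_{[0,n]}]=h_n(x_n)$ for a measurable family $h_n:\X\to\R$, where $h_n(y)$ is the $\P_y$-expectation of $f$ viewed as a function of the tail beyond time $n$. The tower property together with $P$-stationarity furnishes the backward recursion $h_{n-1}=Ph_n$ ($\la$-a.s.), so the $h_n$ form a compatible system. Forward martingale convergence then gives $f=\lim_n h_n(x_n)$ $\P_\la$-a.s., and a re-indexing that uses $h_{n-1}=Ph_n$ to rewrite $\lim_n h_n(x_{n+1})=\lim_n (Ph_{n+1})(x_{n+1})$ identifies this limit with a $T$-invariant function modulo $\P_\la$. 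Descent to the individual $\P_x$ by disintegration of $\P_\la=\int\P_x\,d\la$ then produces the leafwise coincidence of $\mathcal{T}$ and $\mathcal{I}$ for $\la$-a.e.\ $x$.

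The main subtlety, and the chief technical obstacle, is that \thmref{th:02} is genuinely leafwise: globally $\mathcal{T}$ may strictly contain $\mathcal{I}$ mod $\P_\la$ (for instance on periodic deterministic chains, where the tail resolves the initial phase while the invariant algebra does not), but any such excess is carried by ``which-periodic-class-at-infinity'' information that is $\P_x$-a.s.\ constant and therefore trivially $\mathcal{I}$-measurable mod $\P_x$. Accordingly, the step of showing $T$-invariance of $\lim_n h_n(x_n)$ under $\P_x$ must cover both the ``aperiodic'' regime (where $h_n$ stabilizes via the recursion) and the ``periodic'' residue (where $\P_x$-triviality is invoked); the clean way to package this dichotomy is via Derriennic's 0--2 law. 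The overall shape of the argument parallels the classical group case \cite{Kaimanovich-Vershik83} and its random-environment extension \cite{Kaimanovich90a}, following the outline indicated in \cite{Kaimanovich98}.
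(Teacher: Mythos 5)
Your proposal is on the right track in spirit --- the paper's own proof is, at bottom, also an application of the 0--2 law --- but as written it has a genuine gap: the crux is deferred to a 0--2 law that is ``invoked'' rather than applied, and the two steps that actually carry the theorem are missing. First, to get a \emph{leafwise} dichotomy you must know that the quantity $\f(x)=\lim_n\|\pi_x^n-\pi_x^{n+1}\|$ (which governs whether your space-time harmonic sequences $h_n=Ph_{n+1}$ asymptotically lose their time dependence along a path) is a.e.\ constant along sample paths, so that the 0--2 alternative can be read off at the level of the individual measures $\P_x$. This is where $P$-stationarity of the \emph{probability} measure $\la$ does real work in the paper: $\f_n(x)=\|\pi_x^n-\pi_x^{n+1}\|$ is pointwise non-increasing in $n$, its limit $\f$ is subharmonic ($\f\le P\f$), hence harmonic because $\langle\la,P\f-\f\rangle=0$ for the finite stationary measure $\la$, and harmonic functions of a chain with a finite stationary measure are constant along a.e.\ sample path. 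Your argument uses stationarity only to make the shift on $\left(\X^{\Z_+},\P_\la\right)$ measure preserving and never isolates this step; without it the dichotomy is only available per ergodic component of a system you have not analyzed.

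Second, and more seriously, in the value-2 branch your claim that the excess tail information is ``which-periodic-class-at-infinity'' data that is $\P_x$-a.s.\ constant is exactly the assertion to be proved, and your heuristic covers only genuinely periodic chains. The value-2 regime is broader (e.g., deterministic translations, or rotation-type chains with a stationary probability measure); the correct general statement, used in the paper, is that $\f(x)=2$ forces the one-dimensional distributions $\pi_x^n$, $n\ge 0$, to be pairwise \emph{singular}, and it is from this singularity that the coincidence of the tail and Poisson boundaries $\P_x$ -- mod 0 is deduced, cf.\ \cite{Kaimanovich92}. Relatedly, your martingale ``re-indexing'' is not valid as stated: the recursion gives $h_n(x_{n+1})=(Ph_{n+1})(x_{n+1})$, which is \emph{not} $h_{n+1}(x_{n+1})$, so no $T$-invariance of $\lim_n h_n(x_n)$ follows --- indeed, as your own periodic example shows, the identity is false mod $\P_\la$. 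The martingale portion thus only re-expresses the problem (tail functions correspond to bounded space-time harmonic sequences $h_n=Ph_{n+1}$, invariant functions to harmonic functions $h=Ph$); the theorem is precisely the statement that these coincide along $\P_x$-a.e.\ path, and that is what the harmonicity of $\f$ together with the 0--2 dichotomy establishes, not the convergence scheme itself.
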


\begin{proof}
Let
$$
\f_n(x) = \| \d_x P^n - \d_x P^{n+1} \| = \| \pi_x^n - \pi_x^{n+1} \|
\;,\qquad x\in X \;.
$$
Then
$$
\begin{aligned}
\f_{n+1}(x) = \| \d_x P^{n+1} - \d_x P^{n+2} \| = \| (\d_x P^n - \d_x
P^{n+1})P \|& \\
\le \| \d_x P^n - \d_x P^{n+1} \|& = \f_n(x) \;,
\end{aligned}
$$
so that there exists a limit
$$
\f(x) = \lim_n \f_n(x) \;.
$$
Moreover,
$$
\begin{aligned}
\f_{n+1}(x) &= \| \d_x P^{n+1} - \d_x P^{n+2} \| = \| \d_x P(P^n-P^{n+1}) \| \\
&\le \sum_y p(x,y) \| \d_y P^n - \d_y P^{n+1} \| = \sum_y p(x,y) \f_n(y) \;,
\end{aligned}
$$
whence $\f$ is \emph{subharmonic}:
$$
\f \le P\f \;.
$$
The function $\f$ is clearly measurable. Then
$$
\langle \la,P\f \rangle = \langle \la P, \f \rangle = \langle \la, \f \rangle
$$
by stationarity of the measure $\la$, so that in fact $\f$ is harmonic.
Therefore, by a classical property of Markov chains with a finite stationary
measure $\f$ must be constant along a.e.\ sample path (e.g., see
\cite{Kaimanovich92}). By the corresponding 0--2 law (see again
\cite{Kaimanovich92}) in this situation $\f$ can take values 0 and 2 only
(obviously, in the ergodic case only one of these two options may occur). In
the first case the Poisson and the tail boundary coincide for an arbitrary
initial distribution, whereas in the second case for any $x\in\X$ the
one-dimensional distributions $\pi_x^n$ are all pairwise singular, so that the
Poisson and the tail boundaries coincide $\P_x$ -- mod 0.
\end{proof}

Let
$$
H_n(x)=H(\pi_x^n)
$$
be the \emph{entropies} of $n$-step transition probabilities, and let
\begin{equation} \label{eq:ent}
H_n = \int H_n(x)\,d\la(x)
\end{equation}
be their averages over the space $(\X,\la)$. In terms of the shift-invariant
measure
$$
\P_\la=\int\P_x\,d\la(x)
$$
on the space of sample paths $\x=(x_n)\in\X^{\Z_+}$ which corresponds to the
stationary initial distribution $\la$,
\begin{equation} \label{eq:entform}
H_n = - \int \log \pi^n_{x_0}(x_n) \,d\P_\la(\x) \;.
\end{equation}
In yet another language, that of measurable partitions and their (conditional)
entropies (e.g., see \cite{Rohlin67}),
\begin{equation} \label{eq:entcond}
H_n = \H_\la (\a_n|\a_0) = \int \H_x(\a_n)\,d\la(x) \;,
\end{equation}
where $\a_k$ denotes the $k$-th coordinate partition in the path space
$\X^{\Z_+}$, and $\H_\la$ (resp., $\H_x$) denotes the (conditional) entropy
with respect to the measure $\P_\la$ (resp.,~$\P_x$).

\begin{thm} \label{th:entr}
If $H_1<\infty$, then all the average entropies $H_n$ are also finite, there
exists a limit (the \emph{asymptotic entropy})
\begin{equation} \label{eq:asent}
\h=\h(P,\la) = \lim_n \frac {H_n}{n} < \infty \;,
\end{equation}
and $\h=0$ if and only if for $\la$-a.e. point $x\in\X$ the Poisson boundary
of the leafwise Markov chain is trivial with respect to the measure $\P_x$.
\end{thm}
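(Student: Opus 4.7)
My plan is to follow the classical Kaimanovich--Vershik scheme, now applied to the stationary measure $\la$ on the path space $\X^{\Z_+}$ rather than to a group-invariant convolution structure; this works because the entropy theory needs only a shift-invariant probability on a suitable path space, which we have here by stationarity of $\la$.

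First I would establish subadditivity of the sequence $(H_n)$. Using the chain rule $\H_\la(\a_{n+m}\vee\a_n \,|\, \a_0)=\H_\la(\a_n|\a_0)+\H_\la(\a_{n+m}|\a_n,\a_0)$, dropping $\a_n$ in the first term on the left (which only decreases conditional entropy), the Markov property $\H_\la(\a_{n+m}|\a_n,\a_0)=\H_\la(\a_{n+m}|\a_n)$, and stationarity $\H_\la(\a_{n+m}|\a_n)=\H_\la(\a_m|\a_0)=H_m$, one obtains $H_{n+m}\le H_n+H_m$. From $H_1<\infty$ induction gives $H_n<\infty$ for all $n$, and Fekete's lemma gives the existence of the finite limit $\h=\lim H_n/n=\inf_n H_n/n$ in \eqref{eq:asent}.

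Next I would derive the key identity relating $\h$ to the tail $\si$-algebra. Write $\a_\infty=\bigcap_n\si(\a_n,\a_{n+1},\dots)$ for the tail partition in $(\X^{\Z_+},\P_\la)$. Applying the decomposition $\H_\la(\a_1|\a_0)=\H_\la(\a_1|\a_0,\a_{n+1})+I_\la(\a_1;\a_{n+1}|\a_0)$, and computing the mutual information via
\[
I_\la(\a_1;\a_{n+1}|\a_0)=\H_\la(\a_{n+1}|\a_0)-\H_\la(\a_{n+1}|\a_0,\a_1)=H_{n+1}-H_n,
\]
where the last equality uses the Markov property to drop $\a_0$ and stationarity to shift indices, I would obtain
\[
H_{n+1}-H_n=H_1-\H_\la(\a_1|\a_0,\a_{n+1}).
\]
The right-hand side is increasing in $n$ (more conditioning reduces entropy), and by the martingale theorem for conditional entropies it converges to $H_1-\H_\la(\a_1|\a_0,\a_\infty)$. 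Since $H_{n+1}-H_n$ converges, Cesaro gives
\[
\h=H_1-\H_\la(\a_1|\a_0,\a_\infty).
\]

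Finally, $\h=0$ iff $\a_1$ is $\P_\la$-independent of $\a_\infty$ given $\a_0$, which by the Markov property and stationarity forces $\a_n$ to be $\P_\la$-independent of $\a_\infty$ given $\a_0$ for every $n$; equivalently, $\a_\infty$ is trivial modulo $\a_0$, i.e.\ the leafwise tail boundary is $\P_x$--trivial for $\la$-a.e.\ $x$. Invoking \thmref{th:02}, this is the same as $\la$-a.s.\ triviality of the leafwise Poisson boundary.

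The main delicate point is the passage $\H_\la(\a_1|\a_0,\a_{n+1})\searrow\H_\la(\a_1|\a_0,\a_\infty)$, which requires verifying that the decreasing $\si$-algebras $\si(\a_0,\a_{n+1},\a_{n+2},\dots)$ intersect to $\si(\a_0,\a_\infty)$ and that the $L^1$-martingale convergence for conditional entropies applies under $H_1<\infty$; everything else is a routine book-keeping with the chain rule, the Markov property, and stationarity of $\la$.
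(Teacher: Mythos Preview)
Your proposal is correct and follows essentially the same route as the paper. The paper packages the key computation in a separate lemma giving $\H_\la(\a_1^k\,|\,\a_0\vee\a_n)=kH_1+H_{n-k}-H_n$ for general $k$ (derived from the explicit telescoping formula for conditional path probabilities), whereas you obtain the $k=1$ case via the mutual-information identity; these are the same calculation. One small point: your monotonicity claim ``more conditioning reduces entropy'' needs the Markov property to replace $\a_{n+1}$ by $\a_{n+1}^\infty$ first (since the single partitions $\a_{n+1}$ are not nested), which is exactly how the paper organizes it; and the paper's use of general $k$ makes the final step---that $\h=0$ forces the tail to be independent of \emph{all} $\a_1^k$---immediate, whereas you must argue this inductively from the $k=1$ case via stationarity and Markov.
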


Let us put
$$
\a_k^n = \bigvee_{i=k}^n \a_i \;,\qquad 0\le k \le n \le \infty \;,
$$
where, as before, $\a_i$ are the coordinate partitions in the path space. For
proving \thmref{th:entr} we shall need the following

\begin{lem} \label{lem:entr}
For any $k\le n$ the conditional entropy of the partition $\a_1^k$ with
respect to the partition $\a_n^\infty$ in the path space
$\left(\X^{\Z_+},\P_\la\right)$ is
\begin{equation} \label{eq:condentr}
\H_\la\left( \a_1^k \,|\, \a_0\vee \a_n^\infty \right) = \H_\la\left( \a_1^k
\,|\, \a_0\vee\a_n \right) = k H_1 + H_{n-k} - H_n \;.
\end{equation}
\end{lem}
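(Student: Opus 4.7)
The plan is to prove both equalities using the Markov property of the path measure $\P_\la$ together with stationarity of $\la$; no new geometric input is needed, this is pure bookkeeping with conditional entropies.

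For the \emph{first equality}, I would observe that $\a_1^k$ is a subpartition of the past $\a_0^n$ (since $k\le n$), while $\a_{n+1}^\infty$ is the strict future from time $n+1$ onwards. The Markov property says that past and strict future are conditionally independent given the present $\a_n$, hence also given $\a_0\vee\a_n$. Adding a conditionally independent partition to the conditioning does not alter the conditional distribution of $\a_1^k$, so
$$\H_\la(\a_1^k \mid \a_0\vee\a_n\vee\a_{n+1}^\infty) = \H_\la(\a_1^k \mid \a_0\vee\a_n),$$
which is the claim because $\a_0\vee\a_n^\infty = \a_0\vee\a_n\vee\a_{n+1}^\infty$.

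For the \emph{second equality} I would decompose $\H_\la(\a_1^k\vee\a_n\mid\a_0)$ via the chain rule in two different ways, obtaining
$$\H_\la(\a_1^k\mid\a_0) + \H_\la(\a_n\mid\a_0^k) \;=\; \H_\la(\a_n\mid\a_0) + \H_\la(\a_1^k\mid\a_0\vee\a_n),$$
and solve for the last term. The Markov property gives $\H_\la(\a_n\mid\a_0^k) = \H_\la(\a_n\mid\a_k)$, which by stationarity of $\la$ equals $H_{n-k}$; the term $\H_\la(\a_n\mid\a_0)$ is $H_n$ by the definition \eqref{eq:entcond}. A further application of the chain rule gives $\H_\la(\a_1^k\mid\a_0) = \sum_{i=1}^k \H_\la(\a_i\mid\a_0^{i-1})$; the Markov property together with stationarity collapses each summand to $H_1$, so this sum is $kH_1$. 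Assembling the pieces yields the required $kH_1 + H_{n-k} - H_n$.

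The main (though minor) obstacle is ensuring that these chain-rule manipulations are rigorous in a setting where the coordinate partitions $\a_i$ themselves need not have finite entropy: the subtractions only make sense once every conditional entropy in sight is finite. Under the standing assumption $H_1<\infty$ of \thmref{th:entr}, the subadditivity bound $H_n \le \H_\la(\a_1^n\mid\a_0) = nH_1$ (which falls out of the same chain-rule argument) gives $H_n<\infty$ for every $n$, and this legitimizes the whole computation.
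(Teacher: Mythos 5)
Your proof is correct, but it takes a genuinely different route from the paper's for the main (second) equality. The paper proceeds by brute force: it writes $\H_\la\left( \a_1^k \,|\, \a_0\vee\a_n \right) = \int \H_x\left( \a_1^k \,|\, \a_n \right) d\la(x)$ and then computes the conditional measure explicitly via the Markov property, obtaining the telescoping formula
$$
\P_x \left( \a_1^k(\x) \,|\, \a_n(\x) \right) = \frac{p(x_0,x_1)\cdots p(x_{k-1},x_k)\, p^{n-k}(x_k,x_n)}{p^n(x_0,x_n)} \;,
$$
after which taking $-\log$, integrating against $\P_\la$, and invoking shift invariance of $\P_\la$ (so that each one-step term integrates to $H_1$, the $(n-k)$-step term to $H_{n-k}$, and the denominator to $H_n$, cf.\ \eqref{eq:entform}) yields \eqref{eq:condentr} in one stroke. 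You instead stay entirely at the level of abstract conditional-entropy identities: two applications of the chain rule to $\H_\la(\a_1^k\vee\a_n\,|\,\a_0)$, with the Markov property used only to collapse conditionings ($\H_\la(\a_n\,|\,\a_0^k)=\H_\la(\a_n\,|\,\a_k)$, etc.) and stationarity used only through shift invariance of pairwise distributions. Both arguments are sound; each buys something. Your version makes the finiteness bookkeeping explicit and unavoidable --- the subtractions in the chain-rule identities require $H_n\le nH_1<\infty$, a point the paper's proof silently shares (it too must split the integral of the sum of logarithms into separate terms, which is only legitimate when each is integrable under the standing hypothesis $H_1<\infty$ of \thmref{th:entr}). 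The paper's version, on the other hand, produces the explicit formula \eqref{eq:telescope} as a by-product, and this is not idle: it is cited again in the proof of \thmref{th:entr} (to get $\int \H_x\left(\a_1^k\right)d\la(x) = kH_1$), so the computational route does double duty there in a way your abstract route would have to replicate separately (though your chain-rule derivation of $\H_\la(\a_1^k\,|\,\a_0)=kH_1$ in fact supplies exactly that ingredient). Your treatment of the first equality --- conditional independence of past and strict future given the present, preserved when a past-measurable partition $\a_0$ is adjoined to the conditioning --- is a correct and welcome expansion of the paper's one-line appeal to the Markov property.
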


\begin{proof}
Formula \eqref{eq:condentr} and its proof are completely analogous to the
group case considered in \cite{Kaimanovich-Vershik83}. Indeed, the leftmost
identity in \eqref{eq:condentr} immediately follows from the Markov property,
whereas
$$
\H_\la\left( \a_1^k \,|\, \a_0\vee\a_n \right) = \int \H_x \left( \a_1^k \,|\,
\a_n \right)\,d\la(x) \;,
$$
cf. formula \eqref{eq:entcond}. By the definition of conditional entropy, for
any $x\in\X$
$$
\H_x \left( \a_1^k \,|\, \a_n \right) = - \int \log \P_x \left( \a_1^k(\x)
\,|\, \a_n(\x) \right)\,d\P_x(\x) \;,
$$
where $\xi(\x)$ denotes the element of a partition $\xi$ which contains a
sample path $\x$. Now,
\begin{equation} \label{eq:telescope}
\begin{aligned}
\P_x \left( \a_1^k(\x) \,|\, \a_n(\x) \right)
 &= \frac{\P_x \left( \a_1^k(\x) \cap \a_n(\x) \right)}{\P_x \left( \a_n(\x) \right)} \\
 &= \frac{p(x_0,x_1)p(x_1,x_2)\cdots p(x_{k-1},x_k) p^{n-k}(x_k,x_n) }{p^n(x_0,x_n)} \;,
\end{aligned}
\end{equation}
which implies the claim in view of formula \eqref{eq:entform} and shift
invariance of the measure~$\P_\la$.
\end{proof}

\begin{proof}[Proof of \thmref{th:entr}]
Formula \eqref{eq:entform} in combination with shift invariance of the measure
$\P_\la$ easily implies subadditivity of the sequence $H_n$ and existence of
the limit \eqref{eq:asent}. Moreover, the sequence of functions
$\f_n(\x)=-\log\pi_{x_0}^n(x_n)$ satisfies conditions of Kingman's subadditive
ergodic theorem, which implies existence of individual limits
$$
\lim_n -\frac1n \log\pi_{x_0}^n(x_n)
$$
for $\P_\la$-a.e.\ sample path $\x=(x_n)$ as well. If the shift $T$ is
ergodic, then these individual limits almost surely coincide with $\h$. Note
that ergodicity of $T$ is equivalent to absence of non-trivial subsets of the
state space $\X$ invariant with respect to the operator $P$ (by aforementioned
general property of Markov chains with a finite stationary measure), which, in
the case when pairs of points $(x,y)\in R$ with $\pi_x(y)>0$ generate the
relation $R$, is equivalent to ergodicity of $R$.

Actually, \lemref{lem:entr} provides a stronger form of existence of the limit
\eqref{eq:asent}. Namely, since the sequence of partitions
$\a_0\vee\a_n^\infty$ is decreasing on $n$, monotonicity properties of
conditional entropy (e.g., see \cite{Rohlin67}) imply that $\H_\la\left( \a_1
\,|\, \a_0\vee \a_n^\infty \right)$ increases on $n$. In view of formula
\eqref{eq:condentr} it means that not only the limit $\h=\lim H_n/n$ exists,
but also that $\bigl[H_{n+1}-H_n\bigr]\searrow\h$.

As we have already seen on a similar occasion in the proof of
\lemref{lem:entr}, the left-hand side of formula \eqref{eq:condentr} can be
rewritten as
$$
\H_\la\left( \a_1^k \,|\, \a_0\vee \a_n^\infty \right) = \int \H_x \left(
\a_1^k \,|\, \a_n^\infty \right) \,d\la(x) \;.
$$
By continuity of conditional entropy (see again \cite{Rohlin67}), for any
$x\in\X$
$$
\H_x \left( \a_1^k \,|\, \a_n^\infty \right) \nearrow \H_x \left( \a_1^k \,|\,
\a^\infty \right) \le \H_x \left( \a_1^k \right) \;,
$$
where $\a^\infty=\lim_n\a_n^\infty$ is the \emph{tail partition}. The
right-hand side in the above formula is integrable, and
$$
\int \H_x \left( \a_1^k \right) = k H_1 \;,
$$
cf. formula \eqref{eq:telescope}. Therefore, after passing in
\eqref{eq:condentr} to a limit as $n\to\infty$ we conclude that for any $k>0$
$$
\int \H_x \left( \a_1^k \,|\, \a^\infty \right) \,d\la(x) = k (H_1-\h)
$$
and
$$
k\h = \int \Bigl[ \H_x(\a_1^k) - \H_x \left( \a_1^k \,|\, \a^\infty \right)
\Bigr] \,d\la(x) \;.
$$
It means that $\h=0$ if and only if for $\la$-a.e.\ $x\in\X$ the tail
partition $\a^\infty$ is $\P_x$-independent of all coordinate partitions
$\a_1^k$, the latter condition being equivalent to triviality of the tail
partition $\P_x$ -- mod 0.

Finally, by \thmref{th:02}, for $\P_\la$-a.e.\ $x\in\X$ the tail and the
Poisson boundaries coincide $\P_x$ -- mod 0, which completes the proof.
\end{proof}

By passing to an appropriate \emph{boundary extension} of the original
equivalence relation \cite{Kaimanovich05a}, \thmref{th:entr} is also
applicable to the problem of description of non-trivial Poisson boundaries of
leafwise Markov chains. Indeed, a quotient of the Poisson boundary is
\emph{maximal} (i.e., coincides with the whole Poisson boundary) if and only
if for almost all conditional chains determined by the points of this quotient
the Poisson boundary is trivial. Thus, the criterion from \thmref{th:entr}
allows one to carry over the \emph{ray} and the \emph{strip} criteria used for
identification of the Poisson boundary in the group case \cite{Kaimanovich00a}
to the setup of random walks along classes of graphed equivalence relations.

\subsection{The Poisson boundary of random horospheric products}
\label{sec:final}

Below by a \emph{random horospheric product} we shall mean a graphed
equivalence relation $(\X,\mu,R,K)$ such that a.e.\ leafwise graph is a
horospheric product. Moreover, we shall assume that the ``orientations''
(signs) of leafwise height cocycles \eqref{eq:height} are chosen in a
consistent way, i.e., that there exists a global $\Z$-valued \emph{measurable}
cocycle $\B$ on $R$ such that its restriction to a.e.\ leaf is a height
cocycle. Therefore, a.e.\ leafwise graph $[x]^K$, being a horospheric product,
is endowed with its lower and upper boundaries $\pt_\downarrow [x]^K$ and
$\pt_\uparrow [x]^K$, respectively, and one can easily see that the
corresponding boundary bundles over $(\X,\mu,R,K)$ are measurable (cf.
\cite{Kaimanovich04}).

\begin{thm} \label{th:main}
Let $(\X,\mu,R,K,\B)$ be a random horospheric product with uniformly bounded
vertex degrees, and $P$ --- the Markov operator of a random walk along classes
of the equivalence relation $R$ determined by a measurable family of leafwise
transition probabilities $\{\pi_x\}_{x\in\X}$. If $\la\prec\mu$ is a
$P$-stationary probability measure such that the transition probabilities
$\{\pi_x\}$ have a finite first moment
\begin{equation} \label{eq:mom}
\int_R d(x,y)\,d\Pi(x,y) \;,
\end{equation}
where $d$ is the leafwise graph distance, and $\Pi$ is the measure
\eqref{eq:Pi}, then the Poisson boundaries of leafwise random walks are
determined by the \emph{global height drift}
$$
h = \int_R \B(x,y)\,d\Pi(x,y) \;.
$$
If $h=0$, then the Poisson boundary is a.s.\ trivial, whereas when $h>0$
(resp., $h<0$) a.e. leafwise Poisson boundary coincides (mod 0) with the upper
(resp., lower) leafwise boundary endowed with the corresponding limit
distribution (which is well-defined by \propref{pr:conv}).
\end{thm}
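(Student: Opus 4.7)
My plan is to reduce both halves of the theorem to the entropy criterion \thmref{th:entr} via the regularity theory of \secref{sec:reg}. The common starting point is that under the shift-invariant path measure $\P_\la$ almost every sample path $(X_n)$ will be regular in its leafwise horospheric product with rate of escape $|h|$. The inputs are the finite first moment \eqref{eq:mom} and shift-invariance: Birkhoff's ergodic theorem applied to the stationary increments $\B(X_n,X_{n+1})$, whose $\P_\la$-mean is $h$, telescopes via the cocycle identity to $\ov{X_n}/n\to h$ a.s., and Birkhoff applied to the $L^1$ sequence $d(X_n,X_{n+1})$ forces $d(X_n,X_{n+1})=o(n)$ a.s.\ (the quotient $d_n/n$ is the difference $S_n/n-S_{n-1}/n$ of two converging Birkhoff averages). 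These are precisely the hypotheses of \thmref{th:ray}. In the non-ergodic case the drift $h$ is replaced by an invariant function and the argument is carried out componentwise on the ergodic decomposition.

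\medskip

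\textbf{Case $h=0$.} The regular sample paths have sublinear rate of escape, so for every $\epsilon>0$ almost surely $X_n\in B_{\epsilon n}(X_0)$ eventually. The uniform degree bound $D$ gives $|B_{\epsilon n}|\le D^{\epsilon n}$, and hence the $n$-step transition distribution $\pi^n_{X_0}$ is a.s.\ supported on a subexponentially growing set. Via the crude estimate $H_n(X_0)\le\log|\mathrm{supp}\,\pi^n_{X_0}|$ together with Fatou, this yields $\h=\lim H_n/n=0$, and \thmref{th:entr} gives a.s.\ triviality of the leafwise Poisson boundary.

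\medskip

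\textbf{Case $h>0$} (the case $h<0$ being symmetric). By \propref{pr:conv}, regularity upgrades to almost sure convergence of $(X_n)$ to a random point of the upper leafwise boundary $\pt_\uparrow[X_0]^K$, producing a measurable hitting distribution $\nu_{X_0}$ and an associated quotient of the Poisson boundary. To prove maximality I would invoke the strip criterion from the group case \cite{Kaimanovich00a}, transported to the equivalence-relation setting via the boundary-extension formalism flagged at the end of \secref{sec:entr}. The time-reversed walk, well-defined after \corref{cor:stat}, has drift $-h<0$ and thus converges a.s.\ to a random point of the lower boundary $\pt_\downarrow[X_0]^K$. For a.e.\ pair $(\b^-,\b^+)\in\pt_\downarrow[X_0]^K\times\pt_\uparrow[X_0]^K$ drawn from the forward/backward hitting measures, \corref{cor:bil} supplies a \emph{unique} bilateral geodesic strip $S(\b^-,\b^+)\subset[X_0]^K$. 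Being isometric to $\Z$, this strip satisfies $|S(\b^-,\b^+)\cap B_n(O)|\le 2n+1$, which is subexponential in $n$ — precisely the geometric input demanded by the strip criterion to identify $\pt_\uparrow$ with the hitting measure $\nu$ as the full leafwise Poisson boundary.

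\medskip

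I expect the main obstacle to lie not in any geometric step — the description of geodesics in \propref{pr:bil}, the existence and uniqueness of the bilateral strip in \corref{cor:bil}, and the linear strip-volume bound are all in hand — but rather in the precise formulation of the strip criterion for random walks along equivalence relations. One must package the doubly conditioned chains indexed by $(\b^-,\b^+)$ as leafwise Markov chains on an appropriate boundary extension of $(\X,\mu,R,K)$ in the sense of \cite{Kaimanovich05a}, verify that this extension carries a finite stationary measure with finite one-step entropy, and then apply \thmref{th:entr} uniformly to conclude (using the subexponential strip growth in place of the ball-volume bound of the $h=0$ case) that the conditional Poisson boundaries are trivial, i.e., that the candidate quotient is maximal.
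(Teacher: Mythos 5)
Your overall architecture coincides with the paper's own proof, which is precisely this outline: ergodic-theorem regularity of sample paths feeding \thmref{th:ray}, convergence via \propref{pr:conv} when $h\neq 0$, and maximality ``as in the group case'' via the ray criterion or, in the variant you chose, \corref{cor:bil} combined with the strip criterion transported through the boundary-extension mechanism flagged at the end of \secref{sec:entr}. Your Birkhoff argument for $\ov{X_n}=hn+o(n)$ and $d(X_n,X_{n+1})=o(n)$ is exactly the intended ``standard ergodic argument'', and your diagnosis that the real work lies in formalizing the strip criterion for conditional chains on a boundary extension matches where the paper points (\cite{Kaimanovich00a}, \cite{Kaimanovich05a}).

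However, your $h=0$ step contains a genuine error. From ``almost surely $X_n\in B_{\epsilon n}(X_0)$ eventually'' you infer that $\pi^n_{X_0}$ is \emph{supported} on a subexponentially growing set; this does not follow, since almost-sure concentration of paths says nothing about the support of the law. Worse, the hypotheses impose only the finite first moment \eqref{eq:mom}, not bounded range, so $\mathrm{supp}\,\pi^n_{X_0}$ may well be infinite and the crude bound $H_n(X_0)\le\log|\mathrm{supp}\,\pi^n_{X_0}|$ yields nothing (even for a nearest-neighbour walk it would only give $\h\le\log D$, not $\h=0$). The standard repair uses the pointwise identification of the entropy already contained in the paper's proof of \thmref{th:entr}: by Kingman, $-\frac1n\log\pi^n_{x_0}(x_n)\to\h$ for $\P_\la$-a.e.\ path (ergodic-component-wise). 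Since $\#\{y\in B_{\epsilon n}(x_0):\pi^n_{x_0}(y)\le D^{-2\epsilon n}\}\le D^{\epsilon n}$, the $\P_\la$-probability that $x_n$ lies in $B_{\epsilon n}(x_0)$ while $\pi^n_{x_0}(x_n)\le D^{-2\epsilon n}$ is at most $D^{-\epsilon n}$, so by Borel--Cantelli and regularity one gets $\pi^n_{x_0}(x_n)\ge D^{-2\epsilon n}$ eventually a.s., whence $\h\le 2\epsilon\log D$ for every $\epsilon>0$, i.e.\ $\h=0$; this is the entropy--drift inequality the paper's remark alludes to via \cite{Derriennic86} and \cite[Lemma~5.2]{Kaimanovich00a}. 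With this substitution (and noting that the time-reversed walk, defined via the cotransition probabilities discussed after \corref{cor:stat}, inherits the finite first moment because $d$ is symmetric and $\check\Pi$ is the involution image of $\Pi$), your proposal is sound at the level of detail the paper itself supplies.
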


\begin{proof}
\thmref{th:ray} in combination with the standard ergodic arguments (cf.
\cite{Kaimanovich00a}) implies that a.e.\ sample path is regular with the
height drift $h$. If $h=0$, then regularity implies vanishing of the
asymptotic entropy, and therefore triviality of leafwise Poisson boundaries.
If $h\neq 0$, then by \propref{pr:conv} a.e.\ sample path converges to the
corresponding boundary (the upper, if $h>0$, and the lower, if $h<0$) of
leafwise horospheric products. The fact that these boundaries are actually
maximal (i.e., coincide with the leafwise Poisson boundaries) then follows
from the ray criterion (or \corref{cor:bil} in combination with the strip
criterion) in precisely the same way as in the group case, cf.
\cite{Kaimanovich00a}.
\end{proof}

\begin{rem}
Finiteness of the entropies \eqref{eq:ent} (which is crucial for
\thmref{th:entr}) follows, in the usual way, from finiteness of the first
moment \eqref{eq:mom} and uniform boundedness of vertex degrees, cf. \cite[p.
259]{Derriennic86} or \cite[Lemma~5.2]{Kaimanovich00a}.
\end{rem}

Obviously, if the operator $P$ is reversible with respect to a stationary
measure $\la$ (see the discussion at the end of \secref{sec:rweq} for the
definition), then the integral of any additive cocycle on $R$ with respect to
$\Pi$ vanishes. In particular, in this case the global height drift $h$
vanishes, whence

\begin{cor}
Under conditions of \thmref{th:main}, if the operator $P$ is reversible with
respect to the measure $\la$, then the leafwise Poisson boundaries are a.s.\
trivial.
\end{cor}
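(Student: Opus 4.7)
The plan is to reduce to \thmref{th:main} by showing that reversibility forces the global height drift $h$ to vanish; once $h=0$, \thmref{th:main} immediately delivers the triviality of a.e.\ leafwise Poisson boundary.

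First I would unpack reversibility. Per the discussion at the end of \secref{sec:rweq}, saying that $P$ is reversible with respect to the $P$-stationary measure $\la$ means that the cotransition probabilities agree with the forward transition probabilities, $\ch p=p$. Combining this with the identity $d\la(x)\,p(x,y)=d\la P(y)\,\ch p(y,x)$ and with stationarity $\la P=\la$, one reads off the detailed balance relation $d\la(x)\,p(x,y)=d\la(y)\,p(y,x)$. Phrased in terms of the joint two-step measure $\Pi$ from \eqref{eq:Pi}, this says exactly that $\Pi$ is invariant under the involution $(x,y)\mapsto(y,x)$ on $R$.

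Next, I would exploit antisymmetry. Any additive cocycle $c\colon R\to\R$ satisfies $c(x,y)+c(y,x)=c(x,x)=0$, so $c(y,x)=-c(x,y)$. The height cocycle $\B$ from \eqref{eq:height} is such a cocycle, and the inequality $|\B(x,y)|\le d(x,y)$ from \eqref{eq:dineq}, together with the first moment hypothesis \eqref{eq:mom}, ensures that $\B$ is $\Pi$-integrable. The change of variable induced by the involution then yields
$$
h=\int_R \B(x,y)\,d\Pi(x,y)=\int_R \B(y,x)\,d\Pi(x,y)=-\int_R \B(x,y)\,d\Pi(x,y),
$$
forcing $h=0$.

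No substantive obstacle stands in the way: the entire argument is a short manipulation once reversibility is correctly translated into symmetry of $\Pi$. The only point requiring any care is the integrability of $\B$ against $\Pi$, which is where the first moment assumption \eqref{eq:mom} inherited from \thmref{th:main} enters. Invoking \thmref{th:main} with $h=0$ then completes the proof.
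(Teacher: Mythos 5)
Your proof is correct and follows essentially the same route as the paper, which simply asserts (``obviously'') that reversibility makes the $\Pi$-integral of any additive cocycle vanish and then invokes \thmref{th:main} with $h=0$. Your write-up merely supplies the details the paper leaves implicit --- translating reversibility into involution-invariance of $\Pi$, using antisymmetry of the cocycle, and checking integrability of $\B$ via \eqref{eq:dineq} and the first moment condition \eqref{eq:mom} --- all of which are exactly the intended steps.
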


\begin{cor}
Under conditions of \thmref{th:main}, if $\la=\deg\cdot\mu$ is the stationary
measure of the leafwise simple random walk corresponding to a finite
$R$-invariant measure $\mu$ (see \corref{cor:simple}), then the Poisson
boundary of the leafwise simple random walks is a.s.\ trivial.
\end{cor}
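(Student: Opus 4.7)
The plan is to reduce this corollary to the preceding (reversible) corollary via the chain of equivalences already recorded in the paper. Three ingredients need to be lined up: that $\la=\deg\cdot\mu$ is a genuine finite stationary measure for the leafwise simple random walk, that this walk is reversible with respect to $\la$, and that the first moment hypothesis of \thmref{th:main} holds for the simple random walk. Once these are in place, the previous corollary applies directly.

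First I would invoke \corref{cor:simple} to conclude that $\la=\deg\cdot\mu$ is $P$-stationary for the Markov operator $P$ of the leafwise simple random walk. Under the standing assumption of \thmref{th:main} that vertex degrees are uniformly bounded, say by some constant $D$, finiteness of $\mu$ implies $\la(\X)\le D\,\mu(\X)<\infty$, so $\la$ may be normalized to a stationary probability measure without affecting the resulting Markov chain. Next I would appeal to the Remark following \corref{cor:simple}, where it is stated that $R$-invariance of $\mu$ is precisely equivalent to the combination of stationarity of $\deg\cdot\mu$ for the leafwise simple random walk and reversibility of that walk with respect to $\deg\cdot\mu$. Consequently $P$ is reversible with respect to $\la$.

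It remains to check the finite first moment hypothesis \eqref{eq:mom} of \thmref{th:main}. For the simple random walk, $p(x,y)>0$ only when $(x,y)\in K$, in which case $d(x,y)=1$, so
$$
\int_R d(x,y)\,d\Pi(x,y) = \int_\X \sum_{y\sim x} p(x,y)\,d\la(x) = \la(\X) < \infty \;,
$$
and finiteness of the first moment is automatic. Thus all the hypotheses of the preceding corollary (the reversible case of \thmref{th:main}) are satisfied, and triviality of a.e.\ leafwise Poisson boundary follows at once.

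There is essentially no obstacle here; the statement is a direct specialization. The only point requiring a moment's care is ensuring that the normalization constant $\la(\X)$ is both finite and positive, which is exactly why the uniform bound on vertex degrees (already part of \thmref{th:main}) and the finiteness of $\mu$ are both needed. Everything else is bookkeeping that routes through \corref{cor:simple}, the reversibility remark, and the reversible corollary.
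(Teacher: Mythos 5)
Your proposal is correct and follows essentially the same route the paper intends: the corollary is a direct specialization of the preceding (reversible) corollary, obtained by invoking the Remark after \corref{cor:simple}, which identifies $R$-invariance of $\mu$ with stationarity plus reversibility of the leafwise simple random walk with respect to $\la=\deg\cdot\mu$. Your explicit verifications of the finite first moment (which indeed reduces to $\la(\X)<\infty$ since $d(x,y)=1$ on the support of $\Pi$) and of the finiteness and normalization of $\la$ via the uniform degree bound are bookkeeping steps the paper leaves implicit, and you carry them out correctly.
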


\bibliographystyle{amsalpha}
\bibliography{C:/Sorted/MyTex/mine}

\providecommand{\bysame}{\leavevmode\hbox to3em{\hrulefill}\thinspace}
\providecommand{\MR}{\relax\ifhmode\unskip\space\fi MR }
\providecommand{\MRhref}[2]{%
  \href{http://www.ams.org/mathscinet-getitem?mr=#1}{#2}
}
\providecommand{\href}[2]{#2}
\begin{thebibliography}{BSCSW11}

\bibitem[ACFdC11]{AlcaldeCuesta-FernandezdeCordoba11}
Fernando Alcalde~Cuesta and Mar\'{\i}a~P. Fern{\'a}ndez~de C{\'o}rdoba,
  \emph{Nombre de branchement d'un pseudogroupe}, Monatsh. Math. \textbf{163}
  (2011), 389--414.

\bibitem[Ada90]{Adams90}
Scot Adams, \emph{Trees and amenable equivalence relations}, Ergodic Theory
  Dynam. Systems \textbf{10} (1990), no.~1, 1--14. \MR{91d:28041}

\bibitem[BC10]{Benjamini-Curien10p}
Itai Benjamini and Nicolas Curien, \emph{Ergodic theory on stationary random
  graphs}, arXiv:1011.2526v1, 2010.

\bibitem[Ber01]{Bertacchi01}
D.~Bertacchi, \emph{Random walks on {D}iestel-{L}eader graphs}, Abh. Math. Sem.
  Univ. Hamburg \textbf{71} (2001), 205--224. \MR{1873044 (2002i:60095)}

\bibitem[BNW08]{Bartholdi-Neuhauser-Woess08}
Laurent Bartholdi, Markus Neuhauser, and Wolfgang Woess, \emph{Horocyclic
  products of trees}, J. Eur. Math. Soc. (JEMS) \textbf{10} (2008), no.~3,
  771--816. \MR{2421161}

\bibitem[Bow10]{Bowen10p}
Lewis Bowen, \emph{Random walks on coset spaces with applications to
  {F}urstenberg entropy}, arXiv:1008.4933v1, 2010.

\bibitem[BSCSW11]{Bendikov-SaloffCoste-Salvatori-Woess11}
Alexander Bendikov, Laurent Saloff-Coste, Maura Salvatori, and Wolfgang Woess,
  \emph{The heat semigroup and {B}rownian motion on strip complexes}, Adv.
  Math. \textbf{226} (2011), no.~1, 992--1055. \MR{2735780}

\bibitem[BW05a]{Bartholdi-Woess05}
Laurent Bartholdi and Wolfgang Woess, \emph{Spectral computations on
  lamplighter groups and {D}iestel-{L}eader graphs}, J. Fourier Anal. Appl.
  \textbf{11} (2005), no.~2, 175--202. \MR{2131635 (2006e:20052)}

\bibitem[BW05b]{Brofferio-Woess05}
Sara Brofferio and Wolfgang Woess, \emph{Green kernel estimates and the full
  {M}artin boundary for random walks on lamplighter groups and
  {D}iestel-{L}eader graphs}, Ann. Inst. H. Poincar\'e Probab. Statist.
  \textbf{41} (2005), no.~6, 1101--1123. \MR{2172211 (2007a:60045)}

\bibitem[BW06]{Brofferio-Woess06}
\bysame, \emph{Positive harmonic functions for semi-isotropic random walks on
  trees, lamplighter groups, and {DL}-graphs}, Potential Anal. \textbf{24}
  (2006), no.~3, 245--265. \MR{2217953 (2007b:31010)}

\bibitem[Car72]{Cartier72}
P.~Cartier, \emph{Fonctions harmoniques sur un arbre}, Symposia Mathematica,
  Vol. IX (Convegno di Calcolo delle Probabilit\`a, INDAM, Rome, 1971),
  Academic Press, London, 1972, pp.~203--270. \MR{0353467 (50 \#5950)}

\bibitem[CKW94]{Cartwright-Kaimanovich-Woess94}
D.~I. Cartwright, V.~A. Kaimanovich, and W.~Woess, \emph{Random walks on the
  affine group of local fields and of homogeneous trees}, Ann. Inst. Fourier
  (Grenoble) \textbf{44} (1994), no.~4, 1243--1288. \MR{1306556 (96f:60121)}

\bibitem[Der86]{Derriennic86}
Y.~Derriennic, \emph{Entropie, th\'eor\`emes limite et marches al\'eatoires},
  Probability measures on groups, VIII (Oberwolfach, 1985), Lecture Notes in
  Math., vol. 1210, Springer, Berlin, 1986, pp.~241--284. \MR{879010
  (89f:60076b)}

\bibitem[DL01]{Diestel-Leader01}
Reinhard Diestel and Imre Leader, \emph{A conjecture concerning a limit of
  non-{C}ayley graphs}, J. Algebraic Combin. \textbf{14} (2001), no.~1, 17--25.
  \MR{1856226 (2002h:05082)}

\bibitem[EFW07]{Eskin-Fisher-Whyte07}
Alex Eskin, David Fisher, and Kevin Whyte, \emph{Quasi-isometries and rigidity
  of solvable groups}, Pure Appl. Math. Q. \textbf{3} (2007), no.~4, part 1,
  927--947. \MR{2402598 (2009b:20074)}

\bibitem[FM77]{Feldman-Moore77}
Jacob Feldman and Calvin~C. Moore, \emph{Ergodic equivalence relations,
  cohomology, and von {N}eumann algebras. {I}}, Trans. Amer. Math. Soc.
  \textbf{234} (1977), no.~2, 289--324. \MR{58 \#28261a}

\bibitem[Kai86]{Kaimanovich86}
Vadim~A. Kaimanovich, \emph{Brownian motion and harmonic functions on covering
  manifolds. {A}n entropic approach}, Soviet Math. Dokl. \textbf{33} (1986),
  no.~3, 812--816, English tranlation. \MR{88k:58163}

\bibitem[Kai88]{Kaimanovich88}
\bysame, \emph{Brownian motion on foliations: entropy, invariant measures,
  mixing}, Funktsional. Anal. i Prilozhen. \textbf{22} (1988), no.~4, 82--83.
  \MR{977003 (91b:58124)}

\bibitem[Kai89]{Kaimanovich89}
\bysame, \emph{Lyapunov exponents, symmetric spaces and a multiplicative
  ergodic theorem for semisimple {L}ie groups}, J. Soviet Math. \textbf{47}
  (1989), 2387--2398. \MR{89m:22006}

\bibitem[Kai90]{Kaimanovich90a}
\bysame, \emph{Boundary and entropy of random walks in random environment},
  Probability theory and mathematical statistics, {V}ol.\ {I} ({V}ilnius,
  1989), ``Mokslas'', Vilnius, 1990, pp.~573--579. \MR{1153846 (93k:60174)}

\bibitem[Kai91]{Kaimanovich91}
\bysame, \emph{Poisson boundaries of random walks on discrete solvable groups},
  Probability measures on groups, {X} ({O}berwolfach, 1990), Plenum, New York,
  1991, pp.~205--238. \MR{1178986 (94m:60014)}

\bibitem[Kai92]{Kaimanovich92}
\bysame, \emph{Measure-theoretic boundaries of {M}arkov chains, {$0$}-{$2$}
  laws and entropy}, Harmonic analysis and discrete potential theory (Frascati,
  1991), Plenum, New York, 1992, pp.~145--180. \MR{94h:60099}

\bibitem[Kai97]{Kaimanovich97}
\bysame, \emph{Amenability, hyperfiniteness, and isoperimetric inequalities},
  C. R. Acad. Sci. Paris S\'er. I Math. \textbf{325} (1997), no.~9, 999--1004.
  \MR{98j:28014}

\bibitem[Kai98]{Kaimanovich98}
\bysame, \emph{Hausdorff dimension of the harmonic measure on trees}, Ergodic
  Theory Dynam. Systems \textbf{18} (1998), no.~3, 631--660. \MR{99g:60123}

\bibitem[Kai00]{Kaimanovich00a}
\bysame, \emph{The {P}oisson formula for groups with hyperbolic properties},
  Ann. of Math. (2) \textbf{152} (2000), no.~3, 659--692. \MR{1815698
  (2002d:60064)}

\bibitem[Kai03]{Kaimanovich03a}
\bysame, \emph{Random walks on {S}ierpi\'nski graphs: hyperbolicity and
  stochastic homogenization}, Fractals in Graz 2001, Trends Math.,
  Birkh\"auser, Basel, 2003, pp.~145--183. \MR{2091703 (2005h:28022)}

\bibitem[Kai04]{Kaimanovich04}
\bysame, \emph{Boundary amenability of hyperbolic spaces}, Discrete geometric
  analysis, Contemp. Math., vol. 347, Amer. Math. Soc., Providence, RI, 2004,
  pp.~83--111. \MR{2077032 (2005j:20051)}

\bibitem[Kai05]{Kaimanovich05a}
\bysame, \emph{Amenability and the {L}iouville property}, Israel J. Math.
  \textbf{149} (2005), 45--85, Probability in mathematics. \MR{2191210
  (2007c:43001)}

\bibitem[KS10]{Kaimanovich-Sobieczky10}
Vadim~A. Kaimanovich and Florian Sobieczky, \emph{Stochastic homogenization of
  horospheric tree products}, Probabilistic approach to geometry, Adv. Stud.
  Pure Math., vol.~57, Math. Soc. Japan, Tokyo, 2010, pp.~199--229. \MR{2648261
  (2011e:37013)}

\bibitem[KV83]{Kaimanovich-Vershik83}
V.~A. Kaimanovich and A.~M. Vershik, \emph{Random walks on discrete groups:
  boundary and entropy}, Ann. Probab. \textbf{11} (1983), no.~3, 457--490.
  \MR{85d:60024}

\bibitem[KW02]{Kaimanovich-Woess02}
Vadim~A. Kaimanovich and Wolfgang Woess, \emph{Boundary and entropy of space
  homogeneous {M}arkov chains}, Ann. Probab. \textbf{30} (2002), no.~1,
  323--363. \MR{2003d:60152}

\bibitem[LPP95]{Lyons-Pemantle-Peres95}
Russell Lyons, Robin Pemantle, and Yuval Peres, \emph{Ergodic theory on
  {G}alton-{W}atson trees: speed of random walk and dimension of harmonic
  measure}, Ergodic Theory Dynam. Systems \textbf{15} (1995), no.~3, 593--619.
  \MR{1336708 (96e:60125)}

\bibitem[LPP96a]{Lyons-Pemantle-Peres96}
\bysame, \emph{Biased random walks on {G}alton-{W}atson trees}, Probab. Theory
  Related Fields \textbf{106} (1996), no.~2, 249--264. \MR{1410689 (97h:60094)}

\bibitem[LPP96b]{Lyons-Pemantle-Peres96a}
\bysame, \emph{Random walks on the lamplighter group}, Ann. Probab. \textbf{24}
  (1996), no.~4, 1993--2006. \MR{1415237 (97j:60014)}

\bibitem[Roh67]{Rohlin67}
V.~A. Rohlin, \emph{Lectures on the entropy theory of transformations with
  invariant measure}, Uspehi Mat. Nauk \textbf{22} (1967), no.~5 (137), 3--56.
  \MR{0217258 (36 \#349)}

\bibitem[Woe91]{Woess91}
Wolfgang Woess, \emph{Topological groups and infinite graphs}, Discrete Math.
  \textbf{95} (1991), no.~1-3, 373--384, Directions in infinite graph theory
  and combinatorics (Cambridge, 1989). \MR{1141949 (93i:22004)}

\bibitem[Woe05]{Woess05}
\bysame, \emph{Lamplighters, {D}iestel-{L}eader graphs, random walks, and
  harmonic functions}, Combin. Probab. Comput. \textbf{14} (2005), no.~3,
  415--433. \MR{2138121 (2006d:60021)}

\end{thebibliography}

\enddocument

\bye